\newcommand{\XXX}{\mathcal{X}}
\newcommand{\LL}{\mathcal{L}}
\newcommand{\CC}{\mathbb{C}}
\newcommand{\Q}{\mathcal{Q}}
\newcommand{\Ext}{\mathrm{Ext}}
\newcommand{\Hom}{\mathrm{Hom}}
\newcommand{\Gr}{\mathrm{Gr}}
\newcommand{\RR}{\mathbb{R}}
\newcommand{\BBB}{\mathbb{B}}
\newcommand{\Sp}{\mathbb{S}}
\newcommand{\ZZ}{\mathbb{Z}}
\newcommand{\SSS}{\mathcal{S}}
\newcommand{\XX}{\mathcal{X}}
\newcommand{\GG}{\mathrm{G}}
\newcommand{\dd}{\mathbf{d}}
\newcommand{\ff}{\mathbf{f}}
\newcommand{\ee}{\mathbf{e}}
\newcommand{\Gg}{\mathbf{g}}
\newtheorem{thm}{Theorem}[section]
\newtheorem*{Theorem*}{Theorem}
\newtheorem*{Corollary*}{Corollary}
\newtheorem{lem}[thm]{Lemma}
\newtheorem{prop}[thm]{Proposition}
\newtheorem{prop/def}[thm]{Proposition/Definition}
\newtheorem{cor}[thm]{Corollary}
\theoremstyle{definition}
\newtheorem{defn}[thm]{Definition}
\theoremstyle{definition}
\newtheorem{defn/thm}[thm]{Theorem/Definition}
\theoremstyle{remark}
\newtheorem{rmk}[thm]{Remark}
\newtheorem*{rmk*}{Remark}
\title{Specialization map for quiver Grassmannians}
\begin{document}
\address{Giovanni Cerulli Irelli: \newline 
Sapienza-Universit\`a di Roma, 
Via Scarpa 10, 00161 Roma, Italy}
\email{giovanni.cerulliirelli@uniroma1.it}
\address{Francesco Esposito:\newline
University of Padova, via Trieste 63, 35121 Padova, Italy}
\email{esposito@math.unipd.it}
\author[Cerulli Irelli, Esposito, Fang, and Fourier]{Giovanni Cerulli Irelli, Francesco Esposito, Xin Fang, Ghislain Fourier}
\address{Xin Fang:\newline
RWTH Aachen University, Chair of Algebra and Representation Theory, Pontdriesch 10--16, 52062 Aachen, Germany}
\email{xinfang.math@gmail.com}
\address{Ghislain Fourier:\newline
RWTH Aachen University, Chair of Algebra and Representation Theory, Pontdriesch 10--16, 52062 Aachen, Germany}
\email{fourier@art.rwth-aachen.de}

\begin{abstract}
We define a specialization map between cohomology algebras of quiver Grassmannians of Dynkin type and we prove that it is surjective in type A. This generalizes a result of Lanini and Strickland. 
\end{abstract}

\maketitle

\section{Introduction}
Quiver Grassmannians are natural generalizations of Grassmannians and flag varieties. Since every projective variety can be realized as a quiver Grassmannian \cite{ReEvery}, it is natural to restrict the attention to those having representation-theoretic applications, e.g., to canonical bases of quantum groups, dual canonical bases, and cluster algebras. In \cite{CEsp}, a study of the {cohomologies} of quiver Grassmannians was initialized by analyzing in detail the case of quiver Grassmannians attached to indecomposable representations of the Kronecker quiver. In particular, it was shown that such quiver Grassmannians admit an affine paving, and their Hilbert polynomials are explicitly determined. Subsequently, various other classes of quiver Grassmannians are investigated (see \cite{RupelWeist}, \cite{LorscheidWeist1}, \cite{LorscheidWeist2}, \cite{LorscheidWeist3}, \cite{LorscheidWeist4}, \cite{LaPu1}, \cite{LaPu2}) and some general results are obtained in \cite{CEFR}; namely, all quiver Grassmannians attached to complex representations of Dynkin quivers admit affine pavings, thus their {cohomologies are} concentrated in even degrees and are generated by classes attached to algebraic cycles. Furthermore, such properties have also been proved to hold in almost all cases for quivers of affine type, and the algebraicity of the cohomology holds in greater generality, i.e. for quiver Grassmannians attached to rigid representations of acyclic quivers.

Quiver Grassmannians are fibers of a projective family over an affine space whose total space is smooth. This family is called a universal quiver Grassmannian in \cite{CFR, CFFFR}; it depends on two discrete invariants which are the dimension vector of the representation and of the subrepresentation. Moreover universal quiver Grassmannians are endowed with a natural action of a reductive group. Therefore, we may study the variation of the cohomology of the quiver Grassmannians over different orbits {(see \cite{FR} for results in this direction)}. 

The variation of the cohomology of the fibers of a proper map is encoded in the higher direct images of the constant sheaf over the total space. In the case of quiver Grassmannians of Dynkin type these are constructible sheaves over a variety of quiver representations of a given dimension vector, which are constructible with respect to the stratification given by orbits (there are finitely many of them in the Dynkin case). There is an equivalence between constructible sheaves on a given stratified space and functors from its category of \emph{exit paths} \cite[Theorem~1.2]{Tr}.  A great deal of information about these constructible sheaves is contained in  the specialization maps, which are morphisms from the cohomology of a fiber over a point $x_0$ to the cohomology of the fiber over a point in an orbit closing to $x_0$. In significant instances of projective families, e.g. Springer fibers in type $A$ (see \cite{Spalt75}, \cite{DCLP}), one observes the surjectivity of the specialization maps, which indicates a strong form of upper semicontinuity of the cohomology of the fibers of the family in question. Other important instances of surjectivity in cohomology of varieties associated with quivers, arise for the Kirwan map for smooth Nakajima quiver varieties \cite{McGertyNevins}.

A first result on the surjectivity of a specialization map on quiver Grassmannians of Dynkin type has been established in  \cite{LaSt}.
This is part of the framework of linear degenerations of flag varieties of type $A$ introduced and studied in \cite{CFFFR} and \cite{CFFFR2}. In \cite{FR} the authors  determine the supports of the decomposition theorem for the linear degenerations of the complete flag variety by using quantum groups computations. 
Linear degenerations of flag varieties of type $A$ are particular quiver Grassmannians for an equioriented quiver of type $A$. It is hence natural to ask if a specialization map is defined for more general quiver Grassmannians and if it is surjective. 

For a quiver representation $X$ we denote by $\Gr_\ee(X)$ the quiver Grassmannian which parametrizes the subrepresentations of $X$ of dimension vector $\ee$. 

Our main results can be summarized as follows:
\begin{thm}\label{Thm:MainIntro}
Let $\Q$ be a Dynkin quiver, let $\mathbf{d,e}\in \mathbb{N}^{\mathcal{Q}_0}$ be dimension vectors and let $M$ and $N$ be $\Q$-representations of the same dimension vector $\mathbf{d}$. 
\begin{enumerate}
\item[(i)] If $M\leq_{\textrm{deg}}N$ (see Section \ref{Sec:VoR} for the definition of $\leq_{\textrm{deg}}$) then any choice of an exit path  from $N$ to $M$ gives rise to the same specialization map  
$$c_{N,M}:H^\bullet(\Gr_\ee(N))\rightarrow H^\bullet(\Gr_\ee(M)).$$
\item[(ii)] If $\Q$ is of type $A$ then $c_{N,M}$ is surjective.
\end{enumerate}
\end{thm}

Unusually, the restriction to Dynkin quivers in Theorem~\ref{Thm:MainIntro} is not due to the fact that they admit a finite number of orbits, but on the fact that orbit closures of Dynkin quivers are unibranch \cite{Zwara-unibranch}. In particular, the proof of part (i)  generalizes to all representation varieties of algebras where this is known. 

For simplicity, in the introduction we formulate Theorem~\ref{Thm:MainIntro} using exit paths but  we do not explicitly use them in this paper. Instead, we define a specialization map concretely using neighborhoods of the points (see Theorem/Definition~\ref{Def:Specialization}). This corresponds exactly to the construction of  \cite[Theorem~1.2]{Tr}.

The proof of the surjectivity of specialization is obtained by reducing to minimal degenerations, then using a result of Bongartz on minimal degenerations and applying a suitably generalized version of the reduction theorem of \cite[Theorem~24]{CEFR}, to obtain an $\alpha$-partition for which the specialization maps of the single parts are readily determined. This procedure thus yields a comparison theorem between the cohomologies of the quiver Grassmannians of a pair of representations relative to a minimal degeneration. We prove that such a specialization map is surjective and we describe the associated graded of its kernel with respect to a suitable filtration. 
\begin{cor}\label{Cor:MainSemisimple}
Let $\Q$ be a quiver of type $A_n$,  let $\mathbf{e},\mathbf{d}\in\ZZ_{\geq0}^n$ be two dimension vectors, let $M$ be a $\Q$-representation of dimension vector $\mathbf{d}$  and let $\mathbf{0}_\mathbf{d}$ denote the semisimple representation of dimension vector $\mathbf{d}$. Then the inclusion $\Gr_\ee(M)\subset \Gr_\ee(\mathbf{0}_\mathbf{d})=\prod_{i=1}^n\Gr_{\ee_i}(\CC^{\mathbf{d}_i})$ induces a canonical surjective morphism of graded algebras 
$$
\xymatrix{
c_M:\bigotimes_{i=1}^n H^\bullet(\Gr_{\ee_i}(\CC^{\mathbf{d}_i}))
\ar@{->>}[r]&H^\bullet(\Gr_\ee(M)).}
$$
Moreover, $c_{M}=c_{\mathbf{0}_\mathbf{d},M}$.
\end{cor}
We remark that  $c_M$ is uniquely determined by an ideal $I_M$ of $H^\bullet(\Gr_{\ee}(\mathbf{0}_\mathbf{d})$). It is an interesting problem to describe $I_M$ (and hence $H^\bullet(\Gr_\ee(M))$) by generators and relations. In case $M$ is rigid, Corollary~\ref{Cor:MainSemisimple} holds for every quiver \cite[Corollary~43]{CEFR}. 

We now describe the contents of the paper.
In Section~\ref{Sec:Generalities} we collect the tools from representation theory of Dynkin quivers that we need. In Section~\ref{Sec:Specialization} we introduce a general setting in which the specialization map is canonically defined, i.e. independent of the choice of points and paths; moreover we define $\alpha$-partitions and affine pavings of proper $\CC^\times$-equivariant one-dimensional families and prove Proposition~\ref{Prop:OneDimFamily} which is one of the main ingredients in the proof of surjectivity of specialization (Theorem~\ref{Thm:MainIntro}(ii)). In Section~\ref{Sec:SpecQuivGrass} we discuss the specialization map for a universal quiver Grassmannian of Dynkin type;  we show that it satisfies the general setting of Section~\ref{Sec:SpecQuivGrass} (see Lemma~\ref{Lem:GeomSettingQuivGrass}), proving the first two parts of Theorem~\ref{Thm:MainIntro}; in Section~\ref{Subsec:FamilySES} we define one dimensional families associated with non-split short exact sequences and show that they are proper and $\CC^\times$-equivariant; in Section~\ref{subsec:ALfaPartOneDimFam} we show that the family associated with a generating non-split short exact sequence admits an $\alpha$-partition (see Definition~\ref{Def:CellDecFamily}) and in type $A$ it can be refined to an affine paving (Proposition~\ref{Lem:5.6.1/2}); in Section~\ref{Subsec:ProofMainResult} we collect all arguments to prove Theorem~\ref{Thm:MainIntro}. It is worth noticing that the proof of surjectivity of the specialization map reduces to showing that the one-dimensional families associated to minimal degenerations admit an affine paving, or an $\alpha$-partition whose parts have no odd cohomology. We expect that this strategy can be fruitfully put to use in more general contexts (e.g. Dynkin quivers other that of type $A$).
\newline 
\newline
\textbf{Acknowledgements}: 
The first author acknowledges support of the University La Sapienza of Rome (Bando di ateneo RP120172B7D7C329) and of the project funded by NextGenerationEU under NRRP, Call PRIN 2022  No.~104 of February 2, 2022 of Italian Ministry of University and Research; Project 2022S97PMY \textit{Structures for Quivers, Algebras and Representations (SQUARE)}. The second author acknowledges the support of the project of the University of Padova BIRD203834/20 and of the project funded by the European Union -NextGenerationEU under the National Recovery and Resilience Plan (NRRP), Mission 4 Component 2 Investment 1.1 - Call PRIN 2022 No. 104 of February 2, 2022 of Italian Ministry of University and Research; Project 2022S8SSW2 (subject area: PE - Physical Sciences and Engineering) ``Algebraic and geometric aspects of Lie Theory''.
The fourth author gratefully acknowledges support by the Deutsche Forschungsgemeinschaft – Project-ID
286237555– TRR 195.

\section{Generalities}\label{Sec:Generalities}

{ In this paper, if not mentioned otherwise, we fix $\mathbb{C}$, the field of complex numbers, as base field.}

\subsection{Representations of quivers}
Let $\mathcal{Q} = (\mathcal{Q}_0 , \mathcal{Q}_1,s,t)$ be a quiver, that is, an oriented graph; its set of vertices is $\mathcal{Q}_0$ and its set of edges is $\mathcal{Q}_1$. We assume furthermore that the quiver $\mathcal{Q}$ is finite, i.e. both $\mathcal{Q}_0$ and $\mathcal{Q}_1$ are finite sets. A {finite-dimensional} representation of $\mathcal{Q}$ is the data $((V_x)_{x\in \mathcal{Q}_0}, (f_\alpha)_{\alpha\in\mathcal{Q}_1})$, where the $V_x$ are {finite-dimensional} vector spaces, and $f_\alpha: V_{s(\alpha)}\rightarrow V_{t(\alpha)}$ are linear maps between the vector spaces associated with the start $s(\alpha)$ and the terminal $t(\alpha)$ vertices of the arrow $\alpha$.
    
    Let $\mathbf{Rep}(\mathcal{Q})$ be the category of {finite-dimensional} representations of the quiver $\mathcal{Q}$. It is an abelian,  Krull-Schmidt and hereditary category. Given two $\Q$-representations $M$ and $N$ we denote $[M,N]=\dim \Hom_\Q(M,N)$ and $[M,N]^1=\dim \Ext^1_\Q(M,N)$.
    
    Let $V=((V_x)_{x\in \mathcal{Q}_0}, (f_\alpha)_{\alpha\in\mathcal{Q}_1})$ be in $\mathbf{Rep}(\mathcal{Q})$. The vector $\mathbf{dim}\,V=(\dim(V_x))_{x\in\mathcal{Q}_0}\in \mathbb{N}^{\mathcal{Q}_0}$ is called the dimension vector of $V$.
    
    A subrepresentation of $V$ is the data of a subspace $W_x \subseteq V_x$ for every vertex $x$ of $\mathcal{Q}$, such that, for every arrow $\alpha\in\mathcal{Q}_1$, one has $f_\alpha (W_{s(\alpha)})\subseteq W_{t(\alpha)}$.
    
    A quiver is of Dynkin type if its underlying {unoriented} graph is a disconnected union of simply laced Dynkin diagrams. {According to Gabriel's theorem, $\mathbf{Rep}(\mathcal{Q})$ has finitely many isoclasses of indecomposables if and only if $\mathcal{Q}$ is of Dynkin type with finitely many vertices.}
    
    \subsection{Varieties of representations}\label{Sec:VoR}
    Let $\mathbf{d}\in \mathbb{N}^{\mathcal{Q}_0}$ be a dimension vector for the quiver $\mathcal{Q}$ and let $\mathbb{C}^{\mathbf{d}}=\bigoplus_{i\in\Q_0}\CC^{\dd_i}$ be the {$\mathcal{Q}_0$-}graded vector space of dimension vector $\mathbf{d}$. The automorphism group of the graded vector space $\mathbb{C}^{\mathbf{d}}$ is denoted $\mathrm{G}_{\mathbf{d}}=\mathrm{GL}_{\mathbf{d}}(\mathbb{C})=\prod_{i\in\Q_0}\mathrm{GL}_{\mathbf{d}_i}(\CC)$. We denote by $\mathrm{R}_{\mathbf{d}}=\mathrm{R}_{\mathbf{d}}(\Q)=\prod_{\alpha\in\Q_1}\CC^{t(\alpha)\times s(\alpha)}$ the variety of representations of $\Q$ of dimension vector $\mathbf{d}$ (more precisely the variety of $\Q$-representations having $\mathbb{C}^{\mathbf{d}}$ as underlying vector space).  The reductive group $\mathrm{G}_{\mathbf{d}}$ acts from the left on $\mathrm{R}_{\mathbf{d}}$ by change of basis, i.e. $(g_i)\cdot (f_\alpha)=(g_{t(\alpha)} f_\alpha g_{s(\alpha)}^{-1})_{\alpha\in\Q_1}$ and the $\mathrm{G}_{\mathbf{d}}$-orbits correspond to isoclasses. We denote the points of $\mathrm{R}_{\mathbf{d}}$ with lowercase letters but when we consider them as $\Q$-representations we use  upper case letters. Thus we write $m\in \mathrm{R}_{\mathbf{d}}$ or $M\in \mathbf{Rep}(\mathcal{Q})$. We use the notation $[M]$ to denote the isomorphism class of a $\Q$-representation $M$ and we denote by $\mathcal{O}_m$ the $\mathrm{G}_{\mathbf{d}}$-orbit of a point $m\in \mathrm{R}_{\mathbf{d}}$. 

For a Dynkin quiver $\mathcal{Q}$, Gabriel's theorem implies that there are only finitely many isoclasses of representations of $\mathcal{Q}$ of dimension vector $\mathbf{d}$.  The closure of orbits defines a partial order on the set of $\mathrm{G}_{\mathbf{d}}$-orbits in $\mathrm{R}_{\mathbf{d}}$; it is the so-called degeneration order; we denote it $M\leq_{\mathrm{deg}} N$, if 
$\mathrm{G}_\dd \cdot N \subseteq \overline{\mathrm{G}_\dd \cdot M}$, and we say that $N$ is a degeneration of $M$.
    
    \begin{rmk}
    For a Dynkin quiver $\mathcal{Q}$, the degeneration order can be expressed in terms of the category $\mathbf{Rep}(\mathcal{Q})$ as the $\mathrm{Hom}$ order or also as the $\mathrm{Ext}$ order \cite{Bongartz}. This result is not used directly in the present paper; however, it is indirectly used through the characterization of Bongartz of minimal degenerations.
    \end{rmk}

\subsubsection{Minimal degenerations}
We recall the fundamental result of Bongartz on minimal degenerations (formulated in the special case of Dynkin quivers) and derive from it a consequence used in the study of specialization maps.
\begin{thm}[(Bongartz, \cite{Bo}, Theorem 4)]\label{thm:Bongartz}
Let $M\leq_{\mathrm{deg}} N$ be a degeneration of representations of a Dynkin quiver $\mathcal{Q}$. Then, it is minimal if and only if there is an exact sequence 
 $$
0 \rightarrow X_1 \rightarrow Y_1 \rightarrow S_1 \rightarrow 0
 $$
 and a module $Y'$ such that 
 \begin{enumerate}[label=(\roman*)]
     \item $X_1$ and  $S_1$ are indecomposable 
     \item $X_1\oplus S_1$ is a minimal degeneration of $Y_1$.
     \item $M = Y_1 \oplus Y'$;
     \item $N = X_1 \oplus S_1 \oplus Y'$;
     \item for any common indecomposable direct summand $B\not\simeq S_1$ of $N$ and $M$, one has
     $$
     [B, M]^1= [B, N]^1;
     $$
     \item for any common indecomposable direct summand $C\not\simeq X_1$ of $N$ and $M$, one has
     $$
[M, C]^1 = [N, C]^1.
     $$
 \end{enumerate}
\end{thm}
 We now state a consequence of Theorem~\ref{thm:Bongartz} to be used in the study of the specialization map for type $A$ quivers.
\begin{thm}\label{Thm:BongartzTypeA}
  Let $M \leq_{\mathrm{deg}} N$ be a minimal degeneration between representations of a quiver of type $A$. Then, there are
  \begin{enumerate}[label=(\alph*)]
      \item indecomposables $X_1$, $S_1$;
      \item a nonsplit short exact sequence
      $$
      \xi_1 : 0 \rightarrow X_1 \rightarrow Y_1 \rightarrow S_1 \rightarrow 0;
      $$
      \item representations $X' , S'$;
  \end{enumerate}
  satisfying the following conditions:
  \begin{enumerate}[label=(\roman*)]
      \item $M = Y_1 \oplus X' \oplus S'$;
      \item $N = X_1 \oplus S_1 \oplus X' \oplus S'$;
      \item $[S_1 \oplus S' , X_1 \oplus X']^1 = [S_1 , X_1]^1$;
      \item $[X_1 , X']^1 = [S' , S_1]^1= 0$.
  \end{enumerate}
\end{thm}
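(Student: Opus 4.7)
The plan is to use Theorem~\ref{thm:Bongartz} to obtain the short exact sequence $\xi_1$ and the complementary summand $Y'$, and then to refine the decomposition of $Y'$ as $X' \oplus S'$ using combinatorics specific to type $A$. First I apply Theorem~\ref{thm:Bongartz} to the minimal degeneration $M \leq_{\mathrm{deg}} N$ to produce indecomposables $X_1, S_1$, a non-split short exact sequence $\xi_1 : 0 \to X_1 \to Y_1 \to S_1 \to 0$, and a representation $Y'$ with $M = Y_1 \oplus Y'$ and $N = X_1 \oplus S_1 \oplus Y'$, subject to Bongartz's two Ext conditions for every indecomposable summand of $N$ different from $S_1$ (resp.\ from $X_1$). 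Since $\mathbf{Rep}(\mathcal{Q})$ is hereditary (type $A$ is Dynkin), applying $\Hom(A,-)$ and $\Hom(-,A)$ to $\xi_1$ for every indecomposable summand $A$ of $Y'$ yields six-term exact sequences, and an Euler-characteristic count translates Bongartz's conditions into the identities $[A, Y_1]^1 = [A, X_1]^1 + [A, S_1]^1$ and $[Y_1, A]^1 = [X_1, A]^1 + [S_1, A]^1$, equivalently into the vanishing of the corresponding Yoneda connecting homomorphisms.

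Next I decompose $Y' = \bigoplus_j A_j$ into indecomposables and partition the summands so as to form $X'$ and $S'$. I propose putting $A_j$ into $X'$ precisely when $\Ext^1(X_1 \oplus S_1, A_j) = 0$, and into $S'$ otherwise. To make the partition work I need that every summand in $S'$ satisfies $\Ext^1(A_j, X_1 \oplus S_1) = 0$. Dynkin orthogonality (for any pair of indecomposables $U, V$ of a Dynkin quiver, at most one of $\Ext^1(U, V)$ and $\Ext^1(V, U)$ is non-zero) already supplies this in one direction, while the remaining direction requires a type-$A$ combinatorial input exploiting that $\xi_1$ imposes a rigid relation between $X_1$ and $S_1$; this is where the argument relies on the description of indecomposables as intervals and on the shape of the Auslander--Reiten quiver of $\mathcal{Q}$.

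Finally, I verify the theorem's conditions. Items (i), (ii), and (iv) hold by construction and by the defining property of the partition. Condition (iii) unfolds into the three vanishings $\Ext^1(S_1, X') = 0$, $\Ext^1(S', X_1) = 0$, and $\Ext^1(S', X') = 0$; the first two follow by combining the construction of the partition with the vanishing of the Yoneda connecting maps established in the first step. The main obstacle is the last vanishing $\Ext^1(S', X') = 0$, i.e.\ that no extension inside $Y'$ crosses the partition: the plan here is to argue that a non-zero extension $0 \to A_k \to E \to A_j \to 0$ with $A_j \in S'$ and $A_k \in X'$, combined with $\xi_1$ via Yoneda products, would produce an intermediate degeneration strictly between $M$ and $N$, contradicting the minimality of $M \leq_{\mathrm{deg}} N$. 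Making this last argument precise, and controlling the combinatorics of the interval positions of $A_j, A_k$ relative to $X_1, S_1$ in the Auslander--Reiten quiver, is the main technical work.
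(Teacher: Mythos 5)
Your outline coincides with the paper's at the top level: apply Theorem~\ref{thm:Bongartz} to obtain $(X_1,S_1,\xi_1,Y')$ and then split $Y'$ into $X'\oplus S'$ according to $\Ext^1$-vanishing against $X_1\oplus S_1$ (equivalently, by Bongartz's conditions (iii)--(iv), against $Y_1$). However, two of the steps you defer are exactly where the content lies, and one of them is a genuine gap. The claim you postpone as ``type-$A$ combinatorial input'' --- that $\Ext^1(X_1\oplus S_1,A_j)\neq 0$ forces $\Ext^1(A_j,X_1\oplus S_1)=0$ --- is precisely the paper's key observation: since $Y_1$ is the middle term of a nonsplit sequence with indecomposable ends, in type $A$ every indecomposable $A$ satisfies $\Ext^1(A,Y_1)=0$ or $\Ext^1(Y_1,A)=0$, and this transfers to $X_1\oplus S_1$ via Bongartz. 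You have correctly located where type $A$ enters, but pairwise Dynkin orthogonality does not suffice: knowing $\Ext^1(X_1,A_j)\neq 0$ kills $\Ext^1(A_j,X_1)$ but says nothing about $\Ext^1(A_j,S_1)$.

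The serious problem is $\Ext^1(S',X')=0$. Your partition rule is rigid (put $A_j$ in $X'$ if and only if $\Ext^1(X_1\oplus S_1,A_j)=0$), so any summand $A_k$ that is $\Ext^1$-orthogonal to $Y_1$ in \emph{both} directions is forced into $X'$, and nothing you have established excludes $\Ext^1(A_j,A_k)\neq 0$ for some $A_j\in S'$. The paper avoids this by linearly ordering the indecomposable summands of $Y'$ together with $Y_1$ so that $\Ext^1$ vanishes from smaller to larger (using directedness plus the observation above, and the fact that $\Ext^1(A,B)=0$ whenever $\Ext^1(Y_1,A)\neq0$ and $\Ext^1(B,Y_1)\neq0$), and then cutting at $Y_1$; the doubly orthogonal summands land on whichever side the order dictates, which is exactly what guarantees $\Ext^1(S',X')=0$. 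Your fallback --- that a nonsplit extension $0\to A_k\to E\to A_j\to 0$ would produce a degeneration strictly between $M$ and $N$ --- cannot work as stated: this extension lives entirely inside the common summand $Y'$ of $M$ and $N$, and replacing $A_j\oplus A_k$ by $E$ yields a representation \emph{more generic} than $M$, not one strictly between $M$ and $N$; it does not interact with the degeneration $Y_1\rightsquigarrow X_1\oplus S_1$, so no contradiction with minimality results. To repair the argument, either adopt the order-based splitting or allow your partition to reassign the doubly orthogonal summands according to their $\Ext^1$ against the summands already placed in $S'$.
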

\begin{proof}
  From Theorem~\ref{thm:Bongartz}, one gets the data 
  $(X_1 , S_1 , \xi_1 , Y')$. Therefore, to define the data $(X_1 , S_1 , \xi_1 , X' , S')$ that meet the requirements of the theorem, one needs to suitably split $Y'$ as a direct sum $X' \oplus S'$.
  
 Since $Y_1$ is the middle term of a non-split short exact sequence with indecomposable extremes, {under the type $A$ assumption,} it is straightforward to check that, for any indecomposable $B$, one has either 
  $\Ext^1 (B, Y_1) = 0$ or $\Ext^1 (Y_1, B) = 0$; furthermore, if $B,C$ are two indecomposables such that 
  $\Ext^1 (C, Y_1) \neq 0$ and $\Ext^1 (Y_1, B) \neq 0$, then it follows that 
  $\Ext^1 (B, C) = 0$. 
  
  Denote by $\Sigma$ the set of isomorphism classes of indecomposable direct summands of $Y'$ that are different from $X_1$ and $S_1$. Then, the set $\Sigma' = \Sigma \cup \{Y_1\}$ may be linearly ordered in such a way that, for $B,C\in \Sigma'$ such that $A \leq B$, one has $\Ext^1 (A, B) = 0$. With respect to this linear order, let $Y' = X' \oplus S'$, where $S'$ is the sum of the indecomposables $B$ of $Y'$ corresponding to an element of $\Sigma$ for which $B \leq  Y_1$ and $X'$
  is the sum of the indecomposables $C$ of $Y'$ corresponding to an element of $\Sigma$ for which 
  $C \geq  Y_1$.
  
  Then, by the property of the ordering, one gets:
  $$
  \Ext^1(S' , X') = \Ext^1(S' , Y_1) = \Ext^1(Y_1 , X') = 0.
  $$
  Furthermore, since Theorem~\ref{thm:Bongartz} implies that for $B$ an indecomposable direct summand of $X' \oplus S'$ different from both $X_1$ and $S_1$, one has 
  $$
  \Ext^1 (B, Y_1) = \Ext^1 (B, X_1 \oplus S_1) \ \ \mathrm{and} \ \ \Ext^1 (Y_1, B) = \Ext^1 (X_1 \oplus S_1, B),
  $$
  it follows that 
  $$
  \Ext^1(S' , X') = \Ext^1(S' , S_1) = \Ext^1(X_1 , X') = 0.
  $$
  This concludes the proof.
\end{proof}
\subsection{Quiver Grassmannians}\label{SubSec:QuivGrass}
    Quiver Grassmannians are schematic fibers of projective families called universal quiver Grassmannians (see \cite{Schofield}, \cite{CFR}) defined as follows: 
      one considers  the incidence variety 
    $$
    \XXX_{\mathbf{e}, \mathbf{d}} = \Gr_\mathbf{e}^Q(\mathbf{d}):=\{((U_i), (M_\alpha))\in \Gr_\mathbf{e}(\CC^\mathbf{d})\times \mathrm{R}_\mathbf{d}\ \ | \ \ M_\alpha(U_{s(\alpha)})\subseteq U_{t(\alpha)}\;\forall \alpha\in Q_1\}
    $$
 equipped with two projections 
        $$
    \xymatrix{
    \Gr_\mathbf{e}(\CC^\mathbf{d})& 
    \XXX_{\mathbf{e}, \mathbf{d}}\ar_{\ \ p_1}[l]\ar^{p_2}[r]&\mathrm{R}_\mathbf{d}
    }
    $$
    which are $\mathrm{G}_{\mathbf{d}}$-equivariant. The map $p_1$ realizes $\XXX_{\mathbf{e}, \mathbf{d}}$ as the total space of a homogeneous vector bundle on $\Gr_\mathbf{e}(\CC^\mathbf{d})$ (see \cite[section 2.2]{CFR}). This implies that it is irreducible and smooth.
      
      The map  $\pi_{\mathbf{e}, \mathbf{d}} = p_2$ is proper, and its image is the closed $\mathrm{G}_{\mathbf{d}}$-stable subvariety of $\mathrm{R}_\mathbf{d}$ consisting of those representations admitting a subrepresentation of dimension vector $\mathbf{e}$.
     The morphism $\pi_{\mathbf{e}, \mathbf{d}}$ is the universal family of quiver Grassmannians of $\mathbf{e}$-dimensional subrepresentations of $\mathbf{d}$-dimensional representations. For $M\in\mathrm{R}_\mathbf{d}$, the quiver Grassmannian ${\Gr_\mathbf{e}(M)}$ is the fiber $\pi_{\mathbf{e}, \mathbf{d}}^{-1}(M)$ of $\pi_{\mathbf{e}, \mathbf{d}}$ over $M$.

\subsection{Generating short exact sequences}\label{Sec:generating}
Let $\Q$ be a quiver and let 
$$
\xymatrix{
\xi:0\ar[r]&X\ar^\iota[r]&Y\ar^p[r]& S\ar[r]&0
}
$$
be a short exact sequence. Following Caldero and Chapoton \cite{CC}, for every dimension vector $\mathbf{e}$ there is a map $\Psi:\Gr_\mathbf{e}(Y)\rightarrow \coprod_{\ff+\Gg=\ee}\Gr_\ff(X)\times\Gr_\Gg(S)$ given by $N\mapsto (\iota^{-1}(N),p(N))$. This is not a nice map in general, but as noticed in \cite{CEFR}, it becomes very useful if $\xi$ has good homological properties. 
Following \cite{CEFR}, we say that $\xi$ is \emph{generating} if $\Ext^1(S,X)=\CC\xi$. Thus, $\xi$ is either split and $[S,X]^1=0$ or $\xi$ is not split and $[S,X]^1=1$. For every $\ff+\Gg=\ee$ we denote $\mathcal{S}_{\ff,\Gg}^\xi=\Psi^{-1}(\Gr_\ff(X)\times\Gr_\Gg(S))$ and 
$\Psi_{\ff,\Gg}^\xi:{\mathcal{S}_{\ff,\Gg}^\xi}\rightarrow \Gr_\ff(X)\times\Gr_\Gg(S)$. We recall from \cite[Section~1.3]{DCLP} that an $\alpha$-partition of an affine variety $X$ is a finite partition $X=X_1\cup\cdots\cup X_n$ into locally closed subsets such that $X_1\cup\cdots\cup X_i$ is closed for every $i=1,\cdots, n$. It is proved in 
\cite[Lemma 20]{CEFR} that the collection of the strata $\mathcal{S}_{\ff,\Gg}^\xi$ forms an $\alpha$-partition of $\Gr_\mathbf{e}(Y)$. 
It is now important to describe the image of $\Psi_{\ff,\Gg}^\xi$. If $\xi$ splits then $\mathrm{Im}(\Psi_{\ff,\Gg}^\xi)=\Gr_\ff(X)\times\Gr_\Gg(S)$. If $\xi$ is not split, then one defines 
\begin{equation}\label{Eq:DefXsSX}
\begin{array}{cc}
X_S = \mathrm{Ker}(X\rightarrow \tau S)\subset X,& S^X = \mathrm{Im}(\tau^{-}X\rightarrow S)\subset S,
\end{array}
\end{equation}
where $\tau$ denotes the AR-translate and $\tau^-$ its quasi-inverse. By \cite[Theorem~31]{CEFR}, 
\begin{eqnarray}
\mathrm{Im}(\Psi_{\ff,\Gg}^\xi)&=&\{(N_1,N_2)|\, [N_2,X/N_1]^1=0\}\nonumber\\
&=&\left(\Gr_\ff(X)\times\Gr_\Gg(S)\right)\setminus\left(\Gr_\ff(X_S)\times \Gr_{\Gg-\mathbf{dim}S^X}(S/S^X)\right).
\end{eqnarray}
Moreover $\Psi_{\ff,\Gg}^\xi$ is a (locally trivial) affine bundle over its image. 

We can now interpret Theorem~\ref{Thm:BongartzTypeA} in this language and notice that for quivers of type $A$, every minimal degeneration is given by a generating short exact sequence. 
\begin{cor}\label{Cor:MinDegGenerating}
Let $\Q$ be a quiver of type $A$ and let $M\leq_{\mathrm{deg}}N$ be a minimal degeneration. Then, {in} the notation of Theorem~\ref{Thm:BongartzTypeA}, the short exact sequence 
$$
\xymatrix{
\xi:0\ar[r]&X=X_1\oplus X'\ar[r]&Y=M\ar[r]&S=S_1\oplus S'\ar[r]&0
}
$$
is generating. Moreover, $X_S=X'\oplus (X_1)_{S_1}$ and $S^X=(S_1)^{X_1}$. 
\end{cor}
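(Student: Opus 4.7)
The plan is to verify three things in turn: that $\xi$ is short exact, that it is generating, and that $X_S$ and $S^X$ have the claimed form.

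First I would observe that $\xi$ is the direct sum of the Bongartz sequence $\xi_1 : 0 \to X_1 \to Y_1 \to S_1 \to 0$ from Theorem~\ref{Thm:BongartzTypeA} with the two trivial sequences $0 \to X' \xrightarrow{\mathrm{id}} X' \to 0 \to 0$ and $0 \to 0 \to S' \xrightarrow{\mathrm{id}} S' \to 0$. Its middle term is $Y_1 \oplus X' \oplus S' = M$ by condition (i), and its outer terms are $X = X_1 \oplus X'$ and $S = S_1 \oplus S'$, so $\xi$ is automatically short exact.

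For the generating property, bi-additivity of $\Ext^1$ together with Theorem~\ref{Thm:BongartzTypeA}(iii) gives $\Ext^1(S, X) = \Ext^1(S_1 \oplus S', X_1 \oplus X') = \Ext^1(S_1, X_1)$. For a type $A$ quiver, $\Ext^1$-spaces between indecomposables are at most one-dimensional, and since $\xi_1$ is nonsplit by construction, $\Ext^1(S_1, X_1) = \CC\xi_1$. Tracing through the identification shows that $[\xi]$ corresponds to $[\xi_1]$ under the direct sum decomposition, so $\Ext^1(S, X) = \CC \xi$, as required.

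For the decomposition of $X_S$ and $S^X$, I would exploit the fact that the canonical maps $X \to \tau S$ and $\tau^{-} X \to S$ associated with $\xi$ decompose componentwise along $X = X_1 \oplus X'$ and $S = S_1 \oplus S'$. Using AR-duality $\Ext^1(B, A) \cong D\,\overline{\Hom}(A, \tau B)$, the vanishings $\Ext^1(S_1, X') = \Ext^1(S', X_1) = \Ext^1(S', X') = 0$ coming from Theorem~\ref{Thm:BongartzTypeA}(iii) force the three off-diagonal components $X_1 \to \tau S'$, $X' \to \tau S_1$, $X' \to \tau S'$ to vanish; an analogous argument works for $\tau^- X \to S$. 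Only the diagonal component $X_1 \to \tau S_1$, arising from $\xi_1$ itself, is nontrivial, which yields $X_S = (X_1)_{S_1} \oplus X'$ and $S^X = (S_1)^{X_1}$ as claimed.

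The main obstacle I anticipate is making the componentwise decomposition argument rigorous: one needs to check that the canonical maps $X \to \tau S$ and $\tau^{-}X \to S$ constructed in \cite[Theorem~31]{CEFR} are genuinely natural with respect to the direct sum decompositions of $X$ and $S$. This is not fully transparent from the universal properties alone, but should follow from the explicit construction via AR-duality combined with the vanishing statements in Theorem~\ref{Thm:BongartzTypeA}.
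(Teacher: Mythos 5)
Your argument is correct and follows the same route as the paper, whose entire proof of this corollary is the single line ``It follows from Theorem~\ref{Thm:BongartzTypeA}''; you have simply made explicit the steps the authors leave to the reader (bi-additivity of $\Ext^1$ plus condition (iii) for the generating property, and AR-duality to kill the off-diagonal components of $X\to\tau S$ and $\tau^-X\to S$ for the description of $X_S$ and $S^X$). The naturality point you flag at the end is not a real obstacle, since the generating hypothesis makes $\Hom(X,\tau S)\cong D\Ext^1(S,X)$ one-dimensional, so the map $X\to\tau S$ is determined up to scalar and necessarily concentrated in the $X_1\to\tau S_1$ component.
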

\begin{proof}
It follows from Theorem~\ref{Thm:BongartzTypeA}.
\end{proof}    

\section{Specialization}\label{Sec:Specialization}

We consider a geometric setting given by a datum $(\GG,X,Y,\pi)$ satisfying the following: 
\begin{eqnarray}
&&G\textrm{ is a connected algebraic group};\label{Def:SpecializationG}\\ 
&&X \textrm{ and }Y \textrm{ are }G\textrm{-varieties};\label{Def:SpecializationGVar}\\
&&Y \textrm{ has finitely many }G\textrm{-orbits};\label{Def:SpecializationFiniteGOrbits}\\
&& \textrm{the closure }\overline{\mathcal{O}}\textrm{ of each }G\textrm{-orbit }\mathcal{O}\subset Y\textrm{ is unibranch, i.e. the normalization map }\label{Def:SpecializationUnibranch}\\
&&\nu: \widetilde{\overline{\mathcal{O}}}\rightarrow \overline{\mathcal{O}}\textrm{ is a bijection};\nonumber\\
&&\textrm{the stabilizer of each point of }Y\textrm{ is connected};\label{Def:SpecializationStab}\\
&&\pi:X\rightarrow Y \textrm{ is a proper and G-equivariant map}. \label{Def:SpecializationProper}
\end{eqnarray}
In this section we define a map, that we call a specialization map, 
$$
c_{y_1,y_2}:H^\bullet(\pi^{-1}(y_1))\rightarrow H^\bullet(\pi^{-1}(y_2))
$$
for any points $y_1,y_2\in Y$ such that $y_1\in\overline{G\cdot y_2}$ which depends only on the $G$-orbits of $y_1$ and $y_2$. 

For a complex algebraic variety $Z$ we denote by $H^\bullet(Z):=H^\bullet(Z,\ZZ)$ the singular  cohomology of $Z$ with integer coefficients.

\begin{lem}\label{Lem:Transitivity}
Let $y_1$ and $y_2$ be two points in the same $G$-orbit $\mathcal{O}$ in $Y$. Then there is a canonical isomorphism $\varphi_{y_1,y_2}:H^\bullet(\pi^{-1}(y_1))\rightarrow H^\bullet(\pi^{-1}(y_2))$ between cohomology algebras.
In particular, $\varphi_{y_2,y_3}\circ \varphi_{y_1,y_2}=\varphi_{y_1,y_3}$ and $\varphi_{y_1,y_1}=Id$ for every $y_1,y_2,y_3\in \mathcal{O}$. 
\end{lem}
\begin{proof}
Let us define $\varphi_{y_1,y_2}$. Let $g\in\GG$ be such that $g. y_2=y_1$. This defines an isomorphism  $\xymatrix{\phi_{y_2,y_1,g}:\pi^{-1}(y_2)\ar^(.6){\simeq}[r]& \pi^{-1}(y_1)}$ of proper varieties (by \eqref{Def:SpecializationProper}). Since cohomology is a contravariant functor one gets an isomorphism
$$
\varphi_{y_1,y_2,g}:\xymatrix{H^\bullet(\pi^{-1}(y_1))\ar^{\simeq}[r]&H^\bullet(\pi^{-1}(y_2))}.
$$ 
Clearly, if $g.y_2=y_1$ and $h.y_3=y_2$ then 
\begin{equation}\label{Eq:phiLemma}
\phi_{y_2,y_1,g}\circ \phi_{y_3,y_2,h}=\phi_{y_3,y_1,gh}.
\end{equation}
In cohomology this yields
\begin{equation}\label{Eq:VarphiLemma}
\varphi_{y_2,y_3,h}\circ \varphi_{y_1,y_2,g} =\varphi_{y_1,y_3,gh}.
\end{equation}

We need to show that $\varphi_{y_1,y_2,g}$ is independent of $g$.  By \eqref{Eq:VarphiLemma}, $\varphi_{y_2,y_1,g^{-1}}=\varphi_{y_1,y_2,g}^{-1}$. 
Let $h\in\GG$ be another element such that $h.y_2=y_1$. Then $\varphi_{y_1,y_2,h}^{-1}\circ \varphi_{y_1,y_2,g}=\varphi_{y_1,y_1,gh^{-1}}$. We observe that $gh^{-1}\in \textrm{Stab}_\GG(y_1)$ which is  connected by \eqref{Def:SpecializationStab}. Since the action of a connected transformation group is trivial on cohomology, it follows that $\varphi_{y_1,y_1,gh^{-1}}=Id$ and thus $\varphi_{y_1,y_2,g}=\varphi_{y_1,y_2,h}$. This proves also transitivity.
\end{proof}
\begin{defn/thm}\label{Def:HSpecialization}
Let $\mathcal{O}\subset Y$ denote a $G$-orbit.  Then  there is a well-defined inverse system of graded rings $\mathcal{Z}_\mathcal{O}=\{(\varphi_{y_1,y_2}:H^\bullet(\pi^{-1}(y_1))\rightarrow H^\bullet(\pi^{-1}(y_2)))|\, y_1,y_2\in \mathcal{O}\}$. We define 
$$
H^\bullet(X_{[\mathcal{O}]})=\varprojlim_{\mathcal{Z}_\mathcal{O}} \varphi_{y_1,y_2}.
$$
\end{defn/thm}
\begin{proof}
It follows from Lemma~\ref{Lem:Transitivity}.  
\end{proof}
\begin{rmk}
We stress the fact that $H^\bullet(X_{[\mathcal{O}]})$ is not the cohomology of a geometric object, but it  is canonically isomorphic to $H^\bullet(\pi^{-1}(y))$ for any $y\in\mathcal{O}$. We think of  $X_{[\mathcal{O}]}$ as representing every fiber of $\pi$ over a point in $\mathcal{O}$. 
 \end{rmk}
To prove our next result we need the following basic lemma which was communicated to us by A. Rapagnetta. 
\begin{lem}\label{lem:Rap}
Let $Z$ be a normal variety. Let $z\in Z$ and let $D$ be a connected open neighborhood of $z$. Let $D_s$ be a Zariski open subset of the smooth locus of $D$. Then $D_s$ is connected.
\end{lem}
\begin{proof}
Let $f:\tilde{Z}\rightarrow Z$ be a resolution of singularities of $Z$. Let $\tilde{Z}_D:=f^{-1}(D)$. Since $Z$ is normal, $f_\ast(\mathcal{O}_{\tilde{Z}})=\mathcal{O}_{Z}$ and by restriction to the open subset $\tilde{Z}_D$, also $f_\ast(\mathcal{O}_{\tilde{Z}_D})=\mathcal{O}_D$. In particular, $H^0(\tilde{Z}_D,\mathcal{O}_{\tilde{Z}_D})=H^0(D,\mathcal{O}_D)$. Since $D$ is normal, $H^0(D,\mathcal{O}_D)$ has no zero-divisors. It follows that $H^0(\tilde{Z}_D,\mathcal{O}_{\tilde{Z}_D})$ has no zero-divisors. Thus, since $\tilde{Z}_D$ is smooth, it must be connected. Moreover, $D_s\simeq f^{-1}(D_s)$ is an open subset of the connected variety $\tilde{Z}_D$ which is the complement of subvarieties of lower dimension. Hence, it follows that $D_s$ is connected.
\end{proof}
\begin{defn}
An open neighborhood $U$ of a point $y\in Y$ is called a \emph{nice little neighborhood} of $y$ if it satisfies the following two conditions:
\begin{enumerate}
\item[(i)] $U$ is contractible; 
\item[(ii)] the map in cohomology $\psi: H^\bullet(\pi^{-1}(U))\rightarrow H^\bullet(\pi^{-1}(y))$ induced by the inclusion $\{y\}\subset U$ is an isomorphism.
\end{enumerate}
\end{defn}
\begin{lem}\label{Lem:OpenNeigh}
For every $y\in Y$ the following hold:
\begin{enumerate}
\item[(i)] Nice little neighborhoods of $y$ form a basis of open neighborhoods of $y$.
\item[(ii)] Let $\mathcal{O}$ be a $\GG$-orbit such that $y\in\overline{\mathcal{O}}$. Then for every nice little neighborhood $U$ of $y$, the intersection $U\cap \mathcal{O}$ is connected.
\end{enumerate}
\end{lem}
\begin{proof}
Let us prove $(i)$. Let $V$ be an open subset of $Y$ containing $y$. 
Since $\pi$ is a proper and $\GG$-equivariant map, it follows that $\pi_\ast(\ZZ_{X})_{|_V}$ is a complex of sheaves with cohomology constructible for the stratification of the $\GG$-orbits of $Y$ (\cite[Proposition of section~1.9]{GMP2}). Hence there exists a  contractible   open neighborhood $U$ of  $y$, contained in $V$, such that $H^\bullet(\pi_\ast(\ZZ_{X})_{|U})$ coincides with the stalk $H^\bullet(\pi_\ast(\ZZ_{X})_{y})$ at $y$. Moreover, by \cite[Prop. 2.5.2]{Kashiwara-Schapira}, $$H^\bullet(\pi^{-1}(U))=H^\bullet(\pi_\ast(\ZZ_{X})_{|U})\simeq H^\bullet(\pi^{-1}(y)).$$  Thus every open neighborhood of $y$ contains a nice little neighborhood of $y$.

Let us prove $(ii)$. Let now $\mathcal{\mathcal{O}}\subset Y$ be a $\GG$-orbit such that $y\in \overline{\mathcal{O}}$. Let $Z$ be a normalization of $\overline{\mathcal{O}}$. By \eqref{Def:SpecializationUnibranch}, the normalization map $\nu:Z\rightarrow \overline{\mathcal{O}}$ is a homeomorphism. We put $D=\nu^{-1}(U)$. Since $\nu$ is a homeomorphism, $D$ is an open and connected subset of $Z$. Since $\mathcal{O}\subset \overline{\mathcal{O}}$ is smooth, $\nu$ is an isomorphism over $\mathcal{O}$ and thus $\nu^{-1}(\mathcal{O})$ is an open subset of the smooth locus of $Z$.  By lemma~\ref{lem:Rap}, $D\cap \nu^{-1}(\mathcal{O})$ is connected.  Thus $U\cap\mathcal{O}=\nu(D\cap \nu^{-1}(\mathcal{O}))$ is connected.

\end{proof}
Let $\mathcal{O}_1\subset\overline{\mathcal{O}_2}\subset Y$. Let $y_1\in \mathcal{O}_1$ and let $U$ be a nice little  neighborhood of $y_1$. Let $y_2\in U\cap\mathcal{O}_2$. Then one has a homomorphism of graded algebras $c_{y_1,U,y_2}:H^\bullet(X_{[\mathcal{O}_1]})\rightarrow H^\bullet(X_{[\mathcal{O}_2]})$ defined as the composition 
$$
\xymatrix{
H^\bullet(X_{[\mathcal{O}_1]})\ar_{c_{y_1,U,y_2}}[d]\ar^{\simeq}[r]&H^\bullet(\pi^{-1}(y_1)) \ar^{\simeq}[r]& H^\bullet(\pi^{-1}(U))\ar[d]\\
H^\bullet(X_{[\mathcal{O}_2]})&H^\bullet(\pi^{-1}(y_2))\ar_\simeq[l]& H^\bullet(\pi^{-1}(y_2)) \ar@{=}[l] 
}
$$
where the right vertical arrow is induced by the inclusion $\{y_2\}\subset U$. 
\begin{lem}\label{Lem:KeyLemSpec}
Let $y_1\in \mathcal{O}_1$, let $U$ be a nice little neighborhood of $y_1$ and let $y_2\in U\cap \mathcal{O}_2$. Then  
\begin{enumerate}
\item[(i)] $c_{y_1,U,y_2}=c_{g.y_1,g.U,g.y_2}$ for every $g\in \GG$.
\item[(ii)] $c_{y_1,U,y_2}=c_{y_1,U'',y_2}$ for every nice little neighborhood $U''$ of $y_1$ such that $U''\subseteq U $ and $y_2\in U''$.
\item[(iii)] $c_{y_1,U,y_2}=c_{y_1,U,y_2''}$ for every $y_2''\in U\cap\mathcal{O}_2$.
\end{enumerate}
\end{lem}
\begin{proof}
To prove $(i)$, observe that one has the following diagram
$$
\xymatrix@C=20pt{
H^\bullet(X_{[\mathcal{O}_1]})\ar_{c_{g.y_1,g.U,g.y_2}}[ddd]\ar@{=}[dr]\ar^{\simeq}[rr]\ar@{}|*+[F-:<3pt>]{1}[drr]&&H^\bullet(\pi^{-1}(g.y_1)) \ar^{\varphi_{g.y_1,y_1}}[d]\ar^{\simeq}[rr]\ar@{}|*+[F-:<3pt>]{2}[dr]& &H^\bullet(\pi^{-1}(g.U))\ar_{g^\ast}[dl]\ar^\beta[ddd]\\
&H^\bullet(X_{[\mathcal{O}_1]})\ar_{c_{y_1,U,y_2}}[d]\ar^{\simeq}[r]&H^\bullet(\pi^{-1}(y_1)) \ar^{\simeq}[r]& H^\bullet(\pi^{-1}(U))\ar^\alpha[d]\ar@{}|*+[F:<3pt>]{3}[dr]&\\
&H^\bullet(X_{[\mathcal{O}_2]})\ar@{=}[dl]&H^\bullet(\pi^{-1}(y_2))\ar_\simeq[l]& H^\bullet(\pi^{-1}(y_2)) \ar@{=}[l]
\ar@{}|*+[F:<3pt>]{4}[dl]& \\
H^\bullet(X_{[\mathcal{O}_2]})\ar@{}|*+[F:<3pt>]{5}[urr]&&H^\bullet(\pi^{-1}(g.y_2))\ar_{\varphi_{g.y_2,y_2}}[u]\ar_\simeq[ll]& &H^\bullet(\pi^{-1}(g.y_2)) \ar@{=}[ll] \ar^{\varphi_{g.y_2,y_2}}[ul]}
$$
The squares $\xymatrix{\ar@{}|*+[F-:<3pt>]{2}[]}$, $\xymatrix{\ar@{}|*+[F-:<3pt>]{3}[]}$ and $\xymatrix{\ar@{}|*+[F-:<3pt>]{4}[]}$ commute by functoriality of cohomology and the definition of $\varphi_{g.y_1,y_1}$ and $\varphi_{g.y_2,y_2}$ given in Lemma~\ref{Lem:Transitivity}.  The squares $\xymatrix{\ar@{}|*+[F-:<3pt>]{1}[]}$ and $\xymatrix{\ar@{}|*+[F-:<3pt>]{5}[]}$ commute by the universal property of inverse limits. It follows that the leftmost square is commutative, i.e., $c_{g.y_1,g.U,g.y_2}=c_{y_1,U,y_2}$.

To prove $(ii)$ we consider the following commutative diagram 
$$
\xymatrix{
&H^\bullet(\pi^{-1}(U))\ar[dd]\ar[dr]\ar_\cong[dl]&\\
H^\bullet(\pi^{-1}(y_1))&&H^\bullet(\pi^{-1}(y_2))\\
&H^\bullet(\pi^{-1}(U''))\ar[ur]\ar^\cong[ul]&
}
$$
where all the maps are induced by inclusion. The upper and lower path from left to right thus coincide and are $c_{y_1,U,y_2}$ and $c_{y_1,U'',y_2}$, respectively.

To prove \eqref{Def:SpecializationG} it is enough to prove that the following diagram commutes:
$$
\xymatrix{
H^\bullet(\pi^{-1}(y_2'))\ar^{\varphi_{y_2',y_2}}_\simeq[rr]&&H^\bullet(\pi^{-1}(y_2))\\
&H^\bullet(\pi^{-1}(U))\ar^\varphi[ur]\ar_{\varphi'}[ul]&
}
$$
where $\varphi'$ and $\varphi$ are induced by the inclusions $\{y_2'\}\subset U$ and $\{y_2\}\subset U$, respectively. Let $g\in \GG$ such that $g.y_2=y_2'$. Let us denote the two inclusions $\pi^{-1}(y_2)\subset \pi^{-1}(U)$ and $\pi^{-1}(y_2')\subset \pi^{-1}(U)$ by $\iota$ and $\iota'$, respectively. By Lemma~\ref{Lem:OpenNeigh}, the intersection $\mathcal{O}\cap U$ is connected. Thus, there exists a continuous path $\eta:[0,1]\rightarrow \GG$ such that $\eta(0)=Id$, $\eta(1)=g$ and $\eta(t).y_2\in U$ for every $t\in[0,1]$. Consider the map 
$$
\omega: \pi^{-1}(y_2)\times [0,1]\rightarrow \pi^{-1}(U)
$$ defined by $\omega(x,t)=\eta(t).x\in X$. Thus $\omega(-,0)=\iota$ and $\omega(-,1)=\iota'\circ \phi_{y_2,y_2', g}$. Since $\omega$ is a homotopy between $\omega(-,0)$ and $\omega(-,1)$, both $\omega(-,0)$ and $\omega(-,1)$ induce the same map in cohomology. These induced maps are precisely $\varphi$ and $\varphi_{y_2',y_2}\circ\varphi'$.
\end{proof}

\begin{thm}\label{Thm:SpecializationDef}
Let $\mathcal{O}_1\subset \overline{\mathcal{O}_2}$ be two $G$-orbits in $Y$. Let $y_1,y_1'\in \mathcal{O}_1$, let $y_1\in U$ and $y_1'\in U'$ be nice little neighborhoods and let $y_2\in U\cap \mathcal{O}_2$ and $y_2'\in U'\cap \mathcal{O}_2$. Then $c_{y_1,U,y_2}=c_{y_1',U',y_2'}$.
\end{thm}

\begin{proof}
Let $g\in \GG$ such that $y_1'=g.y_1$. 
By Lemma~\ref{Lem:KeyLemSpec} there is the following chain of equalities
$$c_{y_1,U,y_2}=c_{y_1',g.U,g.y_2}=c_{y_1', g.U, y_2''}=c_{y_1',U'', y_2''}=c_{y_1',U',y_2''}=c_{y_1',U',y_2'}$$
where $U''\subseteq g.U\cap U'$ is a nice little neighborhood of $y_1'$ which exists by Lemma~\ref{Lem:OpenNeigh}(i) and $y_2''\in U''\cap\mathcal{O}_2$.
\end{proof}
\begin{defn/thm}\label{Def:Specialization}
Suppose that the datum $(G,X,Y,\pi)$ satisfies the conditions $\eqref{Def:SpecializationG}-\eqref{Def:SpecializationProper}$. Then for every two $\GG$-orbits $\mathcal{O}_1\subset\overline{\mathcal{O}_2}\subset Y$ there is a canonical specialization map  $c_{[\mathcal{O}_1],[\mathcal{O}_2]}:H^\bullet(X_{[\mathcal{O}_1]})\rightarrow H^\bullet(X_{[\mathcal{O}_2]})$.
\end{defn/thm}
\begin{proof}
Define $c_{[\mathcal{O}_1],[\mathcal{O}_2]}=c_{y_1,U,y_2}$, where $U$ is any nice little neighborhood of $y_1\in \mathcal{O}_1$ and $y_2\in U\cap\mathcal{O}_2$. By Theorem~\ref{Thm:SpecializationDef} this map is well-defined.
\end{proof}
\begin{prop}\label{Prop:TransitivitySpecialization}
If $\mathcal{O}_1,\mathcal{O}_2,\mathcal{O}_3\subset Y$ are $\GG$-orbits such that $\mathcal{O}_1\subset \overline{\mathcal{O}_2}\subset \overline{\mathcal{O}_3}$  then $c_{[\mathcal{O}_1],[\mathcal{O}_3]}=c_{[\mathcal{O}_2],[\mathcal{O}_3]}\circ c_{[\mathcal{O}_1],[\mathcal{O}_2]}$.
\end{prop}
\begin{proof}
Let $y_1\in\mathcal{O}_1$. Let $U$ be a nice little neighborhood of $y_1$. Let $y_2\in \mathcal{O}_2\cap U$. Let $V\subseteq U$ be a nice little neighborhood of $y_2$, which exists by Lemma~\ref{Lem:OpenNeigh}(i). Let $y_3\in \mathcal{O}_3\cap V$. We consider the following diagram 
$$
\xymatrix@C=10pt{
H^\bullet(\pi^{-1}(y_1))\ar@{..>}^{c_{y_1,U,y_2}}[rrrr]\ar@{..>}@/_30pt/_{c_{y_1,U,y_3}}[ddrr]&&&&H^\bullet(\pi^{-1}(y_2))\ar@{..>}@/^30pt/^{c_{y_2,V,y_3}}[ddll]\\
&H^\bullet(\pi^{-1}(U))\ar^\cong[ul]\ar[rr]\ar[rrru]\ar[dr]&&H^\bullet(\pi^{-1}(V))\ar_\cong[ur]\ar[dl]&\\
&&H^\bullet(\pi^{-1}(y_3))&&
}
$$
whose solid maps are induced by inclusions. It follows that the solid triangles commute. This proves $c_{y_1,U,y_3}=c_{y_2,V,y_3}\circ c_{y_1,U,y_2}$ and thus, by Definition~\ref{Def:Specialization}, the required equality.
\end{proof}
\begin{rmk}
Proposition~\ref{Prop:TransitivitySpecialization} can be reformulated as follows: let $\mathcal{P}$ be the category whose objects are the $\GG$-orbits in $Y$ and morphisms are inclusions of the orbit closures. Then specialization defines a functor $C:\mathcal{P}\rightarrow GrAlg$ (where $GrAlg$ denotes the category of graded $\ZZ$-algebras) defined on objects as $\mathcal{O}\mapsto H^\bullet(X_{[\mathcal{O}]})$ and on morphisms $\overline{\mathcal{O}_1}\subset \overline{\mathcal{O}_2}\mapsto c_{[\mathcal{O}_1],[\mathcal{O}_2]}$. In fact $\mathcal{P}$ is the category attached to a poset $(\mathcal{P},\leq)$ and by abuse of language we may consider $\mathcal{P}$ either as a category or as a poset.
\end{rmk}

\begin{prop}\label{equiv}
The following statements are equivalent:
\begin{enumerate}
    \item[(i)] for every cover 
    $\mathcal{O}_1\subset\overline{\mathcal{O}_2}$ in $\mathcal{P}$, the specialization map $c_{[\mathcal{O}_1],[\mathcal{O}_2]}$ is surjective;
    \item[(ii)] for every  
    $\mathcal{O}_1\subset\overline{\mathcal{O}_2}$, the specialization map $c_{[\mathcal{O}_1],[\mathcal{O}_2]}$ is surjective.
   \end{enumerate}
\end{prop}

\begin{proof}
It follows by Proposition~\ref{Prop:TransitivitySpecialization}.
\end{proof}
We discuss now the relative case in which we compare the specialization maps for the family $\pi:X\rightarrow Y$ with specialization maps for a subfamily $\pi':X'\rightarrow Y'$.

\begin{prop}\label{Prop:RestrictionSpecialization}
Let $Y'\subset Y$ be a closed subvariety acted upon by a subgroup $G'\subset G$. Let $X'=\pi^{-1}(Y')$ and let $\pi'=\pi|_{X'}:X'\rightarrow Y'$. Suppose that the datum $(G', X', Y',\pi')$ satisfies the geometric setting \eqref{Def:SpecializationG}-\eqref{Def:SpecializationProper}. Let $\mathcal{O}'_1,\mathcal{O}'_2\subset Y'$ be $\GG'$-orbits such that $\mathcal{O}'_1\subset \overline{\mathcal{O}'_2}$. Let $\mathcal{O}_1:= G\cdot \mathcal{O}'_1$ and $\mathcal{O}_2:=G\cdot \mathcal{O}'_2$ be the two $\GG$-orbits in $Y$ such that $\mathcal{O}'_1\subset \mathcal{O}_1$ and $\mathcal{O}'_2\subset \mathcal{O}_2$. Then there is an equality of specialization maps
$$
c_{[\mathcal{O}_1],[\mathcal{O}_2]}=c_{[\mathcal{O}'_1],[\mathcal{O}'_2]}
$$ 
where $c_{[\mathcal{O}_1],[\mathcal{O}_2]}$ is defined by $\pi$ and $c_{[\mathcal{O}'_1],[\mathcal{O}'_2]}$ is defined by $\pi'$.
\end{prop}
\begin{proof}
Let $y_1\in \mathcal{O}'_1$ and  $U\subset Y$ be a nice little neighborhood of $y_1$ in $Y$. Since $U\cap Y'$ is an open neighborhood of $y_1$ in $Y'$, by Lemma~\ref{Lem:OpenNeigh}, there exists a  nice little neighborhood $U'\subset U\cap Y'$ of $y_1$ in $Y'$.  Let $y_2\in U'\cap \mathcal{O}'_2\subset U\cap \mathcal{O}_2$. Consider the specialization map (for $\pi$) 
$$
c_{y_1,U,y_2}:H^\bullet(X_{[\mathcal{O}_1]})\cong H^\bullet(\pi^{-1}(y_1))\rightarrow H^\bullet(\pi^{-1}(y_2))\cong H^\bullet(X_{[\mathcal{O}_2]})
$$
and the specialization map (for $\pi'$)
$$
c_{y_1,U',y_2}:H^\bullet(\pi'^{-1}(y_1))\rightarrow H^\bullet(\pi'^{-1}(y_2)).
$$
The commutativity of the following diagram 
$$
\xymatrix{
H^\bullet(\pi^{-1}(y_1))\ar@{=}[d]&H^\bullet(\pi^{-1}(U))\ar_\cong[l]\ar[r]\ar[d]&H^\bullet(\pi^{-1}(y_2))\ar@{=}[d]\\
H^\bullet(\pi'^{-1}(y_1))&H^\bullet(\pi'^{-1}(U'))\ar_\cong[l]\ar[r]&H^\bullet(\pi'^{-1}(y_2))
}
$$
implies $c_{y_1,U,y_2}=c_{y_1,U',y_2}$, proving the statement.
\end{proof}
An $\alpha$-partition of a variety $X$ is a decomposition $X=\coprod U_j$ into locally closed subsets (for the Zariski topology) such that the pieces can be ordered so that $U_1\coprod\cdots\coprod U_i$ is closed for every $i$. An affine paving of $X$ is an $\alpha$-partition whose parts $U_j$ are isomorphic to affine spaces (see \cite{DCLP}). 
\begin{defn}\label{Def:CellDecFamily}
An \emph{$\alpha$-partition} of a 
proper $\CC^\times$-equivariant family $p:\XX\rightarrow \CC$ is an $\alpha$-partition $\XX=\coprod U_j$ of $\XX$ with the following property: each part $U_j$ is  $\CC^\times$-stable and if $p(U_j)\neq \{0\}$ then the restriction $p|_{U_j}:U_j\rightarrow\CC$ is a fiber bundle (hence trivial). An $\alpha$-partition of $p$ is called an \emph{affine paving} if each part is isomorphic to an affine space, i.e. $U_j$ is an affine space and, if $p(U_j)\neq \{0\}$, there is an isomorphism $U_j\simeq \CC^{\ell_j}\times  \CC$ such that the diagram
$$
\xymatrix{
U_j\ar^(.4){\simeq}[r]\ar_{p|_{U_j}}[d]&\CC^{\ell_j}\times \CC\ar^{p_2}[d]\\
\CC&\CC\ar@{=}[l]
}
$$
commutes.  
\end{defn}
\begin{rmk}\label{Rem:SpecForFamiliesCellDec}
A proper $\CC^\times$-equivariant  family $p:\XX\rightarrow \CC$ satisfies the geometric setting, i.e. the datum $(G=\CC^\times, X=\XX, Y=\CC, \pi=p)$ satisfies \eqref{Def:SpecializationG}-\eqref{Def:SpecializationProper}. Thus, by Definition/Theorem~\ref{Def:Specialization} there is a canonical specialization map $c_p:H^\bullet(p^{-1}(0))\rightarrow H^\bullet(p^{-1}(1))$. It is worth noticing that, by $\CC^\times$-equivariance, $U=\CC$ itself is a nice little neighborhood of $0$, that is,  the inclusion $p^{-1}(0)\subset\XX$ induces an isomorphism $H^\bullet(\XX)\simeq H^\bullet(p^{-1}(0))$. 
\end{rmk}

\begin{prop}\label{Prop:OneDimFamily}
Let $p:\XX\rightarrow \CC$ be a proper $\CC^\times$-equivariant family which admits an affine paving $\XX=\coprod_{j=1}^N U_j$. Then the specialization map $c_p:H^\bullet(p^{-1}(0))\rightarrow H^\bullet(p^{-1}(1))$ is surjective. 
\end{prop}
\begin{proof}
We order the cells $(U_j)$ so that $V_i=U_1\coprod\cdots \coprod U_i$ is closed for every $i$. We denote by $p_i=p|_{V_i}:V_i\rightarrow \CC$  the restricted family. Since $p_i$ is a proper $\CC^\times$-equivariant family which admits an affine paving, by Remark~\ref{Rem:SpecForFamiliesCellDec} there is a canonical specialization map $c_{p_i}:H^\bullet(p_i^{-1}(0))\rightarrow H^\bullet(p_i^{-1}(1))$ and $c_{p}=c_{p_N}$. Let us show by induction on $i$, that $c_{p_i}$ is surjective for all $i$.
We denote by $\XX_0=p^{-1}(0)$ and by $\XX_1=p^{-1}(1)$. We notice that $\XX_0\cap V_i=\coprod_{j=1}^i (U_j\cap \XX_0)$ and $\XX_1\cap V_i=\coprod_{j=1}^i (U_j\cap \XX_1)$ are $\alpha$-partitions. By hypothesis, $U_j\cap \XX_0$ is a cell and $U_j\cap \XX_1$ is either empty (when $U_j\subset \XX_0$) or it is a cell.  If $i=1$, since $\XX_0$  and $\XX_1$ are closed subsets of a projective space and $\XX_0\cap V_1$  and $\XX_1\cap V_1$ are closed cells, then $\XX_0\cap V_1$ is a point and $\XX_1\cap V_1$ is either empty or a point; in both cases the specialization map is surjective.  Let $i>1$ and denote by $X=V_i$, $U=U_i$ and $Z=V_i\setminus U_i=\coprod_{j=1}^{i-1}U_j$. For $k=0,1$, we denote by $X^k=X\cap\XX_k$, $U^k=U\cap \XX_k$ and $Z^k=Z\cap\XX_k$. We have the following commutative diagram:
$$
\xymatrix{
H^\bullet(X^0,Z^0)\ar[r]&H^\bullet(X^0)\ar[r]\ar@/^3pc/@{..>}^(.4){c_{p_{i}}}[dd]&H^\bullet(Z^0)\ar@/^3pc/@{..>}^(.4){c_{p_{i-1}}}[dd]\\
H^\bullet(X,Z)\ar[r]\ar^\simeq[u]\ar[d]&H^\bullet(X)\ar[r]\ar^\simeq[u]\ar[d]&H^\bullet(Z)\ar^\simeq[u]\ar^{f}[d]\\
H^\bullet(X^1,Z^1)\ar[r]&H^\bullet(X^1)\ar[r]&H^\bullet(Z^1)
}
$$
By Remark~\ref{Rem:SpecForFamiliesCellDec}, the upper vertical morphisms are isomorphisms. By induction $c_{p_{i-1}}$ and hence $f$  is surjective. 
Since $Z^0$ admits an affine paving, it has no odd cohomology. Moreover $U^0=X^0\setminus Z^0$ is an affine space, thus homeomorphic to the open unit ball $\BBB\setminus \Sp\subset \RR^{2\ell}$ where $\Sp$ denotes the unit sphere of  $\RR^{2\ell}$ and $\BBB$ denotes the closed unit ball of  $\RR^{2\ell}$.  

It is well-known that $H^\bullet(X^0,Z^0)=H^\bullet(\BBB,\Sp)$, and in particular  $H^\bullet(X^0,Z^0)$ is concentrated in a unique even degree.  Indeed,  by \cite[Prop. 2.22]{Hatcher}, the relative cohomology of the pair $H^\bullet(X^0,Z^0)$ is the reduced cohomology of the space (endowed with the quotient topology) obtained from $X^0$ by collapsing $Z^0$ to a point. Since $X^0$ is compact and $Z^0\subset X^0$ is closed, one verifies that this new space is homeomorphic to the one-point compactification of $X^0\setminus Z^0$, which is also obtained from $\BBB$ by collapsing $\Sp$ to a point. 

Since $U^1$ is either empty or isomorphic to $U^0$, the same argument shows that  $H^\bullet(X^1,Z^1)$  is concentrated in a unique even degree, if non-zero. 
It follows that the diagram above is a commutative diagram of short exact sequences: 
$$
\xymatrix{
0\ar[r]&H^\bullet(X^0,Z^0)\ar[r]&H^\bullet(X^0)\ar[r]&H^\bullet(Z^0)\ar[r]&0\\
0\ar[r]&H^\bullet(X,Z)\ar[r]\ar^\simeq[u]\ar^g[d]&H^\bullet(X)\ar[r]\ar^\simeq[u]\ar[d]&H^\bullet(Z)\ar^\simeq[u]\ar@{->>}[d]\ar[r]&0\\
0\ar[r]&H^\bullet(X^1,Z^1)\ar[r]&H^\bullet(X^1)\ar[r]&H^\bullet(Z^1)\ar[r]&0
}
$$
We prove that $g$ is surjective. If $p(U)=\{0\}$ then  $X^1=Z^1$ and hence  $H^\bullet(X^1,Z^1)=0$; in this case $g=0$ is surjective. If $p(U)=\CC$ then $U\simeq (\BBB\setminus\Sp)\times\CC$. Again using \cite[Prop. 2.22]{Hatcher}, one gets 
$$
H^\bullet(X,Z)\simeq H^\bullet(\BBB\times\CC,\Sp\times \CC)\simeq H^\bullet(\BBB,\Sp)\otimes H^\bullet(\CC)\simeq H^\bullet(\BBB,\Sp).
$$
Indeed, one checks that the space obtained from $X$ by collapsing $Z$ to a point is homeomorphic to the space obtained from $\BBB\times \CC$ by collapsing $\Sp\times \CC$ to a point.

It follows that we get a commutative diagram whose horizontal maps are isomorphisms:
$$
\xymatrix{
H^\bullet(X,Z)\ar^(.4){\simeq}[r]\ar^g[d]&H^\bullet(\BBB\times \CC, \Sp\times \CC)\ar^{g'}[d]\\
H^\bullet(X^1,Z^1)\ar^(.4){\simeq}[r]&H^\bullet(\BBB\times \{1\}, \Sp\times \{1\}).
}
$$ 
By \cite[Proposition~VII.7.6]{D},  $g'$ is an isomorphism. 
\end{proof}

\section{Specialization map for quiver Grassmannians}\label{Sec:SpecQuivGrass}

We now apply the results of Section~\ref{Sec:Specialization} to study specialization map for quiver Grassmannians of Dynkin type. We hence fix a Dynkin quiver $\Q$ and two dimension vectors $\mathbf{e,d}\in\ZZ^{\Q_0}_{\geq0}$. We consider the universal quiver Grassmannian $\pi_{\ee,\dd}:\XXX_{\ee,\dd}\rightarrow \mathrm{R}_\dd$ defined in Section~\ref{SubSec:QuivGrass}. 
\begin{lem}\label{Lem:GeomSettingQuivGrass}
The datum $(\GG_\dd, \XX_{\ee,\dd}, \mathrm{R}_\dd,\pi_{\ee,\dd})$ satisfies the geometric setting  \eqref{Def:SpecializationG}-\eqref{Def:SpecializationProper}.
\end{lem}
\begin{proof}
Conditions \eqref{Def:SpecializationG} and \eqref{Def:SpecializationGVar} are obvious. Condition \eqref{Def:SpecializationFiniteGOrbits} follows by Gabriel's theorem. Unibranchness of orbit closures is proved by Zwara \cite[Theorem~1.2]{Zwara-unibranch}; thus \eqref{Def:SpecializationUnibranch} is satisfied. The stabilizer of a $\Q$-representation $M$ is $\mathrm{Aut}_\Q(M)$ which is a Zariski-open subset of the vector space $\mathrm{End}_\Q(M)$; thus \eqref{Def:SpecializationStab} holds. The last condition \eqref{Def:SpecializationProper} holds because $\pi_{\ee,\dd}$ is a closed sub-family of the constant family $\Gr_\ee(\CC^\dd)\times \mathrm{R}_\dd\rightarrow \mathrm{R}_\dd$, which is proper.
\end{proof}

Recall that  $\GG_\dd$-orbits  in $\mathrm{R}_\dd$ correspond to isomorphism classes of $\Q$-representations. Given an isomorphism class $[M]$ of $\Q$-representations corresponding to an orbit $\mathcal{O}\subset \mathrm{R}_\dd$, we denote by $H^\bullet(\Gr_\ee([M]))$ as in Definition~\ref{Def:HSpecialization}, i.e., as $H^\bullet(X_{[\mathcal{O}]})$ for $X=\XXX_{\ee,\dd}$.
By Lemma~\ref{Lem:GeomSettingQuivGrass} and Theorem/Definition~\ref{Def:Specialization}, given two isomorphism classes $[N]$ and $[M]$ corresponding to the two orbits $\mathcal{O}_1$ and $\mathcal{O}_2$ in $\mathrm{R}_\dd$, respectively, assuming that $\mathcal{O}_1\subset\overline{\mathcal{O}_2}$ there is a well-defined specialization map 
$$
c_{[N],[M]}: H^\bullet(\Gr_\ee([N]))\rightarrow H^\bullet(\Gr_\ee([M]))
$$
where, $c_{[N],[M]}:=c_{[\mathcal{O}_1],[\mathcal{O}_2]}$ (see Definition~\ref{Def:Specialization}). 

Let $N_0$ be the unique semi-simple isomorphism class of dimension vector $\dd$. Let $0_\dd\in \mathrm{R}_\dd$ be the point corresponding to $N_0$. It is the zero vector of the complex vector space $\mathrm{R}_\dd$ and it is the unique $\GG_\dd$-fixed point of $\mathrm{R}_\dd$. 
For any isomorphism class $[M]$ of dimension vector $\dd$, let $\mathcal{O}$ denote the corresponding $\GG_\dd$-orbit in $\mathrm{R}_\dd$. Let $m\in \mathcal{O}$ . Then there is a closed embedding $\iota: \xymatrix{\Gr_\ee(m)=\pi_{\ee,\dd}^{-1}(m)\ar@{^(->}[r]&\pi_{\ee,\dd}^{-1}(0_\dd)}=\prod_{i\in \Q_0}\Gr_{\ee_i}(\CC^{\dd_i})$ which gives a morphism in cohomology 
$$
\xymatrix{
r_{m}:H^\bullet(\Gr_\ee(0_\dd))\ar^(.55){\iota^\ast}[r]&H^\bullet(\Gr_\ee(m))\ar^\cong[r]& H^\bullet(\Gr_\ee([M]))
}
$$

\begin{prop}\label{semisimple}
With the notation introduced above the following statements hold:
\begin{enumerate}
\item[(i)] $r_{m_1}=r_{m_2}$, for any  $m_1, m_2\in \mathcal{O}_M$. 
\item[(ii)] $r_m=c_{0_\dd,M}$, for any $m\in \mathcal{O}$.
\end{enumerate}
Thus, we may define the map $r_M:=r_m$ for any $m\in \mathcal{O}_M$. 
\end{prop}

\begin{proof}
Let us prove $(i)$. Let $g\in \GG_\dd$ be such that $g.m_2=m_1$. The commutativity of the following diagram 
$$
\xymatrix{
&H^\bullet(\Gr_\ee(0_\dd))\ar^{\iota^\ast}[r]\ar^{\varphi_{0_\dd,0_\dd,g}=Id}[dd]&H^\bullet(\Gr_\ee(m_1))\ar^{\varphi_{m_1,m_2,g}}[dd]\ar^\cong[dr]& \\
H^\bullet(\Gr_\ee(N_0))\ar[ur]\ar[dr]&&&H^\bullet(\Gr_\ee([M]))\\
&H^\bullet(\Gr_\ee(0_\dd))\ar^{\iota^\ast}[r]&H^\bullet(\Gr_\ee(m_2))\ar^\cong[ur]&
}
$$ 
yields the equality of the upper and lower paths, that is $r_{m_1}=r_{m_2}$.

To prove $(ii)$, we consider a nice little neighborhood $U$ of  $0_\dd$. Let $m\in U\cap \mathcal{O}_M$. Let $i:\{m\}\rightarrow U$ be the natural inclusion. Since $U$ is contractible, the lower right map of the following diagram is an isomorphism. 
$$
\xymatrix{
H^\bullet(\pi_{\ee,\dd}^{-1}(m))&H^\bullet(\pi_{\ee,\dd}^{-1}(U))\ar[l]\ar^\cong[r]&H^\bullet(\pi_{\ee,\dd}^{-1}(0_\dd))\ar@/_20pt/_{c_{0_\dd,U,m}}[ll]\\
H^\bullet(\mathrm{Gr}_{\mathbf{e}}(\CC^\dd)\times\{m\})\ar^{i^\ast}[u]&\ar_\cong[l]H^\bullet(\mathrm{Gr}_{\mathbf{e}}(\CC^\dd)\times U)\ar^\cong[r]\ar[u]&H^\bullet(\mathrm{Gr}_{\mathbf{e}}(\CC^\dd)\times \{0_\dd\})\ar@{=}[u]\ar@/^20pt/^{Id}[ll]
}
$$ 
Since this diagram commutes, the statement follows.
\end{proof}

\begin{prop}\label{equiv2}
Let $\mathcal{Q}$ be a quiver of Dynkin type. The following statements are equivalent:
\begin{enumerate}
    \item[(i)] for every minimal degeneration 
    $M\leq_{\mathrm{deg}} N$, the specialization map $c_{N, M}$ is surjective;
    \item[(ii)] for every degeneration 
    $M\leq_{\mathrm{deg}} N$, the specialization map $c_{N, M}$ is surjective;
    \item[(iii)] for every $M\in \mathrm{R}_{\mathbf{d}}$, the map $r_M:\bigotimes_{i\in\Q_0} H^\bullet(\Gr_{\mathbf{e}_i}(\CC^{\mathbf{d}_i})\rightarrow H^\bullet(\Gr_\mathbf{e}(M))$ is surjective. 
\end{enumerate}
\end{prop}

\begin{proof}
The equivalence of statements (i) and (ii) follows by Proposition~\ref{equiv}.

By Proposition~\ref{semisimple}, statement (iii) is equivalent to the surjectivity of the specialization map $c_{0_\dd ,M}$. It follows that (ii) implies (iii). For any $M\leq_{\mathrm{deg}} N$, by Proposition~\ref{Prop:TransitivitySpecialization}, 
$$
r_M=c_{0_\dd , [M]} = c_{[N],[M]} \circ c_{0_\dd, [N]}.
$$
Thus, the surjectivity of $r_M$ implies the surjectivity of $c_{N , M}$. Hence (iii) implies (ii).
\end{proof}

\subsection{One-dimensional families associated with non-split short exact sequences}\label{Subsec:FamilySES}

Let $\xi:\xymatrix@C=10pt{0\ar[r]& X \ar[r]& Y \ar^p[r]& S\ar[r]&0}$ be a non-split short exact sequence in $\mathbf{Rep}(\mathcal{Q})$. Fixing bases of $X$ and $S$ and a section of the map $p:Y \rightarrow S$ as graded vector spaces, one gets two distinct points $M, N\in \mathrm{R}_{\mathbf{d}}$ corresponding respectively to $M=Y$ and $N=X\oplus S$, where $\mathbf{d} = \dim(Y)$. With respect to this basis, the linear maps of $N=X\oplus S$ are block-diagonal 
    $$
    \begin{pmatrix}
    f_\alpha & 0 \\
    0 & g_\alpha
    \end{pmatrix}
    $$
    and the linear maps of $M=Y$ are block-upper triangular, with block-diagonal part coinciding with that of $N$
    $$
    \begin{pmatrix}
    f_\alpha & h_\alpha \\
    0 & g_\alpha
    \end{pmatrix}.
    $$
    We consider the affine map $\mathcal{L}_{N,M}:\CC\rightarrow\mathrm{R}_{\mathbf{d}}$ given by
    \begin{equation}\label{Eq:MatrixXLambda}
    \lambda \mapsto
    \begin{pmatrix}
    f_\alpha & \lambda h_\alpha \\
    0 & g_\alpha
    \end{pmatrix}, \ \ \lambda\in\mathbb{C}.
    \end{equation}
    We consider the one-dimensional sub-torus  $\CC^\times$ of $G_\mathbf{d}$ which in this basis is given by 
    \begin{equation}\label{Eq:OneDimTorus}
    t_\lambda=
    \begin{pmatrix}
    \lambda & 0 \\
    0 & 1
    \end{pmatrix}, \ \ \lambda\in\mathbb{C}^\ast.
    \end{equation}
We notice that $t_\lambda\cdot M=(t_{\lambda}M_\alpha t_{\lambda^{-1}})_{\alpha\in\Q_1}=\mathcal{L}_{N,M}(\lambda)$ and $t_\lambda\cdot N=N$, i.e. $\mathcal{L}_{N,M}$ is $\CC^\times$-equivariant, where the action of $\CC^\times$ on $\CC$ is by multiplication.

\begin{defn}\label{defonedim}
We define the family $p_\xi$ associated to $\xi$ as the pull-back: 
$$
\xymatrix{
\XXX_\xi\ar[r]\ar_{p_\xi}[d]&\XXX_{\mathbf{e,d}}\ar^{\pi_{\mathbf{e,d}}}[d]\\
\CC\ar^{\mathcal{L}_{N,M}}[r]&\mathrm{R}_{\mathbf{d}}
}
$$
\end{defn} 

The following statements hold.

\begin{prop}
The proper family $p_\xi:\XXX_\xi\rightarrow\CC$ is acted upon by the torus $\CC^\times$ defined in \eqref{Eq:OneDimTorus}. Thus $p_\xi:\XXX_\xi\rightarrow\CC$ is a proper $\CC^\times$-equivariant family. 
\end{prop}

\begin{proof}
Since   $\mathcal{L}_{N,M}$ and $\pi_{\mathbf{e,d}}$ are  $\CC^\times$-equivariants,  the pull-back  $p_\xi$ is $\CC^\times$-equivariant. 
\end{proof}

\begin{rmk}\label{Rem:XILambda}
For $\lambda\in\CC$ we denote by $\xi_\lambda$ the short exact sequence $\xymatrix@C=10pt{0\ar[r]& X \ar^\iota[r]& Y_\lambda\ar^p[r]& S\ar[r]&0}$ where $Y_\lambda$ is the quiver representation defined by \eqref{Eq:MatrixXLambda}. We notice that the inclusion $\iota$ and the projection $p$ as maps of the graded vector space $X\oplus S$ do not  depend on $\lambda$.
\end{rmk}

\subsection{\texorpdfstring{$\alpha$}{alpha}-partitions for one-dimensional families associated to generating extensions}\label{subsec:ALfaPartOneDimFam}
   In this section we consider non-split \emph{generating} short exact sequences (see Section~\ref{Sec:generating}).   We prove that the family $p_\xi:\mathcal{X}_\xi\rightarrow\CC$ associated to a generating non-split extension admits an $\alpha$-partition (see Definition~\ref{Def:CellDecFamily}) which is an affine paving if $\Q$ is of type $A$.
 
 We thus fix  a non-split generating  short exact sequence $\xi:\xymatrix@C=10pt{0\ar[r]& X \ar[r]& Y \ar[r]& S\ar[r]&0}.$   
    We think of  $X\oplus S$ and $Y$ as having the same underlying graded vector space $V$ and thus  $X$ is a graded vector subspace of $V$. In particular for a point $(A,Y_\lambda)$ of $\mathcal{X}_\xi$ it makes perfect sense to consider the quiver representations $A\cap X$, $A/A\cap X$ and $X/A\cap X$.
    
    \begin{defn}\label{alfa}
Let $\mathcal{S}_{\mathbf{f}, \mathbf{g}}^{(i)}$, with $\mathbf{f}+\mathbf{g}=\mathbf{e}$ and $i=0,1$, be the subset of $\mathcal{X}_\mathcal{L}$ defined as follows:
       
       $$
       \mathcal{S}_{\mathbf{f}, \mathbf{g}}^{(i)} = 
       \left\{ 
       (A, Y_\lambda) \in \XXX_\LL\ \ | \ \ \mathbf{dim}( A\cap X) = \mathbf{f} \ \mathrm{and} \ [A/A\cap X , X/A\cap X]^1 = i
       \right\}
       $$
    \end{defn}
    
    \begin{lem}\label{Lem:AlphaPart}
    The partition $\XXX_\xi= \coprod_{\ff, \Gg, i} \SSS^{(i)}_{\ff, \Gg}$ is an $\alpha$-partition.
    \end{lem}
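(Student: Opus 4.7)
I proceed in three stages: verify the subsets $\SSS^{(i)}_{\ff, \Gg}$ form a set-theoretic partition of $X_\mathcal{L}$; check each is locally closed; and exhibit an $\alpha$-partition ordering.

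For the partition, observe that $X \subseteq \CC^\dd$ is a subrepresentation of every $B \in \mathcal{L}$ with quotient $B/X \cong S$, since the affine family $\mathcal{L}$ only rescales the off-diagonal blocks, which vanish on $X$. Hence $A \cap X$ is a subrepresentation of $X$, the image $A/A\cap X$ embeds as a subrepresentation of $S$, and $X/A\cap X$ is a quotient of $X$; thus both $\ff$ and $i$ are well-defined. To see $i \in \{0, 1\}$, I use that $\Q$ is hereditary: applying $\Hom(-, X)$ to $0 \to A/A\cap X \to S \to S/(A/A\cap X) \to 0$ and then $\Hom(A/A\cap X, -)$ to $0 \to A\cap X \to X \to X/A\cap X \to 0$ yields successive surjections $\Ext^1(S, X) \twoheadrightarrow \Ext^1(A/A\cap X, X) \twoheadrightarrow \Ext^1(A/A\cap X, X/A\cap X)$, and $\dim \Ext^1(S, X) = 1$ by the generating assumption on $\xi$.

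For the $\alpha$-partition structure, the key facts are upper semicontinuity statements. Each coordinate $(A, B) \mapsto \dim(A \cap X)_x$ is upper semicontinuous since $X \subseteq \CC^\dd$ is a fixed subspace, so each level set $L_\ff := \{(A, B) : \dim(A\cap X) = \ff\}$ is locally closed in $X_\mathcal{L}$ and $\{(A, B) : \dim_\CC(A \cap X) \leq k\}$ is open. Ordering $\{L_\ff\}$ by $|\ff| = \sum_x \ff_x$, the initial union up to level $k$ equals this open set; within the intermediate locally closed stratum $\{\dim_\CC(A \cap X) = k\}$ the distinct $L_\ff$ with $|\ff| = k$ are mutually clopen, since any limit point in $L_\ff$ has componentwise dimension $\geq \ff$ and equality of total dimension forces componentwise equality. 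Hence $\{L_\ff\}_\ff$ is an $\alpha$-partition of $X_\mathcal{L}$. Within each $L_\ff$, the pairs $(A/A\cap X, X/A\cap X)$ assemble into a family of pairs of $\Q$-representations with fixed dimension vectors, and semicontinuity of relative $\Ext^1$ makes $\SSS^{(0)}_{\ff, \Gg}$ open in $L_\ff$; hence $L_\ff = \SSS^{(0)}_{\ff, \Gg} \sqcup \SSS^{(1)}_{\ff, \Gg}$ is an $\alpha$-partition of $L_\ff$. Since a refinement of an $\alpha$-partition by $\alpha$-partitions of its strata is again an $\alpha$-partition, the result follows.

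The main technical point I expect to need care with is the semicontinuity of $\dim \Ext^1(A/A\cap X, X/A\cap X)$ over $L_\ff$: although $A\cap X \subseteq X$ and $A/A\cap X \subseteq S$ each depend algebraically on $(A, B)$, one must assemble them into bona fide relative $\Q$-representations over $L_\ff$ (sub- and quotient bundles carrying compatible module structure) before applying semicontinuity to the associated relative Hom--Ext complex.
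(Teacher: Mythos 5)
Your proof is correct and follows essentially the same route as the paper: first establish the coarser $\alpha$-partition indexed by $\ff=\mathbf{dim}(A\cap X)$ via upper semicontinuity of intersection dimensions, then refine each part using upper semicontinuity of $\dim\Ext^1(A/(A\cap X),X/(A\cap X))$, which makes $\SSS^{(1)}_{\ff,\Gg}$ closed and $\SSS^{(0)}_{\ff,\Gg}$ open in its part. The extra verifications you include (that $i\in\{0,1\}$ via the hereditary and generating hypotheses, and the handling of ties among $\ff$ of equal total dimension) are sound, and the relative-$\Ext^1$ semicontinuity you flag as delicate is precisely the point the paper also asserts without further elaboration.
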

    \begin{proof}
    Let us introduce the coarser partition $\XXX_\xi= \coprod_{\ff, \Gg} \SSS_{\ff, \Gg}$ where
     $\SSS_{\ff, \Gg}:=\SSS_{\ff, \Gg}^{(0)}\cup \SSS_{\ff, \Gg}^{(1)}$. This is a finite partition and for every dimension vector $\mathbf{n}$ the set $\coprod_{\ff+\Gg\geq \mathbf{n}} \SSS_{\ff, \Gg}$ is closed (here $\geq$ is meant componentwise). It is thus an $\alpha$-partition. By upper semi-continuity of $\dim (\Ext^1 (-, -))$, the partition $\SSS_{\ff, \Gg}:=\SSS_{\ff, \Gg}^{(0)}\cup \SSS_{\ff, \Gg}^{(1)}$ is an $\alpha$-partition with $\SSS_{\ff, \Gg}^{(1)}$ closed. Combining these two statements together one gets the result.   
        \end{proof}
\begin{rmk}
The proof of Lemma~\ref{Lem:AlphaPart} uses the same argument used in \cite[Lemma~20]{CEFR} for a single fiber. 
\end{rmk}   
We consider the following commutative diagram of maps
$$
\begin{tikzcd}
\SSS^{(i)}_{\ff , \Gg} \arrow{r}{{j_{\ff,\Gg}^{(i)}}} \arrow{d}{\Psi_{\ff, \Gg}^{(i)}}
& \XXX_\xi\arrow{d}{\pi_\xi}\\
\Gr_\ff (X) \times \Gr_\Gg (S) \times \CC \arrow{r}{p_3} & \CC
\end{tikzcd}
$$
where {$j_{\ff,\Gg}^{(i)}$} is the natural inclusion, the map $p_3$ is the projection on the third factor, and $\Psi_{\ff, \Gg}^{(i)}$ maps $(A,Y_\lambda)$ to 
$(A\cap X , p(A), Y_\lambda)$. We notice that $\SSS^{(i)}_{\ff , \Gg}$ is $\CC^\times$-stable; furthermore, {noticing} that $\CC^\times$ acts on $\Gr_\ff (X) \times \Gr_\Gg (S) \times \CC$ by its natural action on $\CC$, one has that $\Psi_{\ff, \Gg}^{(i)}$ and $p_3$ are $\CC^\times$-equivariant. 

Now we study the parts $\SSS^{(i)}_{\ff , \Gg}$ of the $\alpha$-partition. Our analysis is based on the techniques of \cite[Section 3]{CEFR}.

\begin{lem}\label{l1}
The map $\Psi_{\ff, \Gg}^{(i)}$ is a locally trivial affine bundle on its image. Moreover, the image of $\Psi_{\ff, \Gg}^{(i)}$ is
\begin{enumerate}
\item[(i)] $((\Gr_\ff (X)\times \Gr_\Gg (S) )-
(\Gr_\ff (X_S)\times \Gr_{\Gg -\dim(S^X)}(S/S^X)))
\times \CC$, if $i=0$; and
\item[(ii)] $
\Gr_\ff (X_S)\times \Gr_{\Gg -\dim(S^X)}(S/S^X)
\times \{0\}$, if $i=1$.
\end{enumerate}
\end{lem}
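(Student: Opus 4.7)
The plan is to compute directly the fibers of $\Psi_{\ff, \Gg}^{(i)}$ over points $(A_1, A_2, t) \in \Gr_\ff(X) \times \Gr_\Gg(S) \times \mathcal{L}$. Write $Y_t$ for the representation of $\Q$ on the graded space $X \oplus S$ whose arrows are $\bigl(\begin{smallmatrix} f_\alpha & t n_\alpha \\ 0 & g_\alpha \end{smallmatrix}\bigr)$, so that $Y_0 = N$ and $Y_t \cong Y$ for $t \neq 0$. A subrepresentation $A \subset Y_t$ with $A \cap X = A_1$ and $\pi(A) = A_2$ is recovered from a graded section $s \colon A_2 \to X$ (well-defined modulo $\Hom_{\mathrm{gr}}(A_2, A_1)$) by $A = A_1 + \mathrm{graph}(s)$. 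The subrepresentation condition translates into an inhomogeneous linear equation $\delta \bar s = -t \bar n$ in $\Hom_{\mathrm{gr}}(A_2, X/A_1)$, where $\delta$ is the standard differential computing $\Hom$ and $\Ext^1$ of representations and $\bar n$ represents the class $\xi|_{A_2, X/A_1} \in \Ext^1(A_2, X/A_1)$.

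The key observation is that, since $\xi$ is generating and $\mathbf{Rep}(\Q)$ is hereditary, the long exact sequences attached to $0 \to A_2 \to S \to S/A_2 \to 0$ and $0 \to A_1 \to X \to X/A_1 \to 0$ yield successive surjections
$$
\Ext^1(S, X) \twoheadrightarrow \Ext^1(A_2, X) \twoheadrightarrow \Ext^1(A_2, X/A_1).
$$
Hence $[A_2, X/A_1]^1 \leq 1$, with equality iff the composed image $\xi|_{A_2, X/A_1}$ is nonzero. Combined with the equation above, the fiber over $(A_1, A_2, t)$ is non-empty iff $t \cdot \xi|_{A_2, X/A_1} = 0$ in $\Ext^1(A_2, X/A_1)$: for $i = 0$ this holds for every $t \in \mathcal{L}$, while for $i = 1$ it forces $t = 0$, i.e.\ the base point is $N$.

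By \cite[Theorem~31]{CEFR} applied to $\xi$, the vanishing locus $\{(A_1, A_2) : \xi|_{A_2, X/A_1} = 0\}$ coincides with the complement of $\Gr_\ff(X_S) \times \Gr_{\Gg - \dim(S^X)}(S/S^X)$. Through the surjection $\Ext^1(A_2, X) \twoheadrightarrow \Ext^1(A_2, X/A_1)$ this vanishing locus is also exactly $\{[A_2, X/A_1]^1 = 0\}$, so substituting into the previous paragraph yields the image descriptions (i) and (ii). When the fiber is non-empty it is an affine space of dimension $[A_2, X/A_1] = \dim \Hom(A_2, X/A_1)$, which is constant on each stratum thanks to constancy of the Euler form on the relevant dimension vectors. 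Local triviality of the affine bundle then follows by the relative version of the standard argument in \cite[Section~3]{CEFR}, applied over the base $\Gr_\ff(X) \times \Gr_\Gg(S) \times \mathcal{L}$, since $\bar n$ and its twist by $t$ vary algebraically. The main technical point is to ensure this local triviality holds uniformly across the special fiber $t = N$, where the inhomogeneous term $t \bar n$ vanishes but the fiber dimension is preserved.
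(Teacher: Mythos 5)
Your proposal is correct and follows essentially the same route as the paper: the paper's proof invokes the vector-bundle map $\Phi\colon H\to K$ and the section $z_\theta$ from \cite[Section~3]{CEFR} together with \cite[Theorems~24, 31, 32]{CEFR}, which is exactly the machinery you unpack explicitly via the fiberwise equation $\delta\bar s=-t\bar n$ and the surjections $\Ext^1(S,X)\twoheadrightarrow\Ext^1(A_2,X/A_1)$ coming from heredity and the generating hypothesis. The difference is only presentational: you re-derive in coordinates what the paper obtains by citation, including the same appeal to \cite[Theorem~31]{CEFR} to identify the locus $\{[A_2,X/A_1]^1=1\}$ with $\Gr_\ff(X_S)\times\Gr_{\Gg-\dim(S^X)}(S/S^X)$.
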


\begin{proof}
By \cite[Theorem~26]{CEFR}, if there exists $N\subset Y_\lambda$ such that $[p(N),X/N\cap X]^1=1$ then $\lambda=0$. It follows that $\Psi_{\ff, \Gg}^{(1)}$ has image in $\Gr_\ff (X)\times \Gr_\Gg (S)\times \{Y_0\}$. The stated image follows from \cite[Lemma~31\eqref{Def:SpecializationUnibranch}]{CEFR}.
Following \cite[Section 3]{CEFR}, one has two vector bundles $H$ and $K$ on 
$$
\Gr_\ff (X) \times \Gr_\Gg (S)\times \CC
$$
whose fiber over a point 
$(N_1,N_2,\lambda)$ is 
$$\prod_{i\in \Q_0} \Hom((N_2)_i, (X/N_1)_i)\times \{\lambda\}$$ 
and 
$$\prod_{\alpha\in \Q_1} \Hom((N_2)_{s(\alpha)}, (X/N_1)_{t(\alpha)})\times \{\lambda\}$$
respectively. Let $\Phi: H \rightarrow K$ be the Ringel map of vector bundles (see \cite[page~14]{CEFR}) whose kernel over $(N_1,N_2,\lambda)$ is $\Hom_\Q(N_2,X/N_1)\times\{\lambda\}$. By Remark~\ref{Rem:XILambda}, every splitting $\theta:S\rightarrow Y$ of the short exact sequence $\xi_1$ (as $\Q_0$-graded vector spaces)
 is also a splitting of $\xi_\lambda$, for all $\lambda\in\CC$. We choose a splitting $\theta$ so that $Y_\lambda$ is represented in suitable basis of the $\Q_0$-graded vector space $X\oplus S$ as 
$$
(Y_\lambda)_\alpha=\begin{pmatrix}f_\alpha&\lambda n_\alpha\\0&g_\alpha\end{pmatrix}
$$ 

 By \cite[Section 3]{CEFR}, one gets a section $z_\theta$ of $K$ over $\Gr_\ff (X) \times \Gr_\Gg (S)$ and thus . As in \cite[Lemma 22]{CEFR}, one shows that there is an isomorphism of schemes
$$
\SSS_{\ff , \Gg}
=
\SSS^{(0)}_{\ff , \Gg}
\coprod \SSS^{(1)}_{\ff , \Gg}
\cong \Phi^{-1}(z_\theta).
$$
Analogously to \cite[Theorem 24]{CEFR}, we find that the parts $\SSS^{(0)}_{\ff , \Gg}$ and $\SSS^{(1)}_{\ff , \Gg}$ are affine bundles on their {images under} $\Phi$. Moreover, as in \cite[Theorem 32]{CEFR}, one gets the {statement (i) of the lemma}.

{Statement (ii)} follows, using \cite[Lemma 31 \eqref{Def:SpecializationUnibranch}]{CEFR},  by observing that the image of $\SSS^{(1)}_{\ff , \Gg}$ through $\Phi$ is exactly 
$
    \Gr_\ff (X_S) 
    \times \Gr_{\Gg-\textbf{dim}\, S^X} (S/S^X)
    \times \{ N \}.
    $
\end{proof}
\begin{prop}
The $\alpha$-partition $\XXX_\xi= \coprod_{\ff, \Gg, i} \SSS^{(i)}_{\ff, \Gg}$ induces an $\alpha$-partition of the family $p_\xi:\XXX_\xi\rightarrow\CC$.
\end{prop}
\begin{proof}
The restriction $p|_{ \SSS^{(i)}_{\ff, \Gg}}:\SSS^{(i)}_{\ff, \Gg}\rightarrow\CC$ is $\CC^\times$-equivariant. We have $p(\SSS^{(0)}_{\ff, \Gg})=\CC$ and $p(\SSS^{(1)}_{\ff, \Gg})=\{0\}$. The restriction map $p|_{\SSS^{(0)}_{\ff, \Gg}}: \SSS^{(0)}_{\ff, \Gg}\rightarrow \CC$ is defined as the composition
$$
\xymatrix{
\SSS^{(0)}_{\ff, \Gg}\ar@{->>}_{\Psi_{\ff, \Gg}^{(0)}}[d]\ar^{p|_{\SSS^{(0)}_{\ff, \Gg}}}[dr]&\\
U\times \CC\ar_(.65){p_2}[r]&\CC
}
$$
where $U$ is described in Lemma~\ref{l1}. Since $\Psi_{\ff, \Gg}^{(0)}$ is locally trivial on its image by Lemma~\ref{l1} and $p_2$ is trivial, we conclude that $p|_{\SSS^{(0)}_{\ff, \Gg}}$ is locally trivial and hence trivial. 
\end{proof}
\begin{lem}\label{l2}
Let $\Q$ be a quiver of type $A$ and let $M\leq_{\mathrm{deg}} N$ be a minimal degeneration associated with the generating short exact sequence 
$
\xymatrix{
\xi:0\ar[r]&X\ar[r]&Y\ar[r]&S\ar[r]&0
}
$
as in Corollary~\ref{Cor:MinDegGenerating}.
Then {both}
$\Gr_\ff (X)\setminus\Gr_\ff (X_S)$ and 
$\Gr_\Gg (S)\setminus\Gr_{\Gg - \dim(S^X)}(S/S^X)$ admit {affine pavings}.
\end{lem}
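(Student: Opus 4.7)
The plan is to reduce the statement for $\Gr_\ff(X) \setminus \Gr_\ff(X_S)$ to the case of the indecomposable summand $X_1$ by exploiting the direct sum decomposition $X = X_1 \oplus X'$ of Theorem~\ref{Thm:BongartzTypeA}, and then to observe that the indecomposable case is essentially trivial because every indecomposable representation of a type $A$ quiver is thin. The claim for $S$ will be handled by a parallel argument based on $S = S_1 \oplus S'$.

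First I would invoke the split short exact sequence $0 \to X' \to X \to X_1 \to 0$, which is generating in the sense of Section~\ref{Sec:generating} because $\Ext^1(X_1, X') = 0$ by Theorem~\ref{Thm:BongartzTypeA}(iv). The associated partition map
$$
\Phi: \Gr_\ff(X) \to \coprod_{\ff_1 + \ff_2 = \ff} \Gr_{\ff_1}(X') \times \Gr_{\ff_2}(X_1), \qquad N \mapsto (N \cap X', \pi_{X_1}(N)),
$$
gives an $\alpha$-partition of $\Gr_\ff(X)$ whose strata are locally trivial affine bundles over their image. Since $X_S = X' \oplus (X_1)_{S_1}$, a subrepresentation $N \subset X$ lies in $X_S$ if and only if its projection $\pi_{X_1}(N)$ is contained in $(X_1)_{S_1}$, and therefore
$$
\Gr_\ff(X) \setminus \Gr_\ff(X_S) = \coprod_{\ff_1 + \ff_2 = \ff} \Phi^{-1}\bigl( \Gr_{\ff_1}(X') \times (\Gr_{\ff_2}(X_1) \setminus \Gr_{\ff_2}((X_1)_{S_1})) \bigr).
$$

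Next I would exploit thinness: because $\Q$ is of type $A$, the indecomposable $X_1$ has dimension vector in $\{0, 1\}^{\Q_0}$, so every subrepresentation of $X_1$ is determined by its support. Hence $\Gr_{\ff_2}(X_1)$ is a finite set of reduced points, as is the subset $\Gr_{\ff_2}((X_1)_{S_1})$ and the difference $U_{\ff_2} := \Gr_{\ff_2}(X_1) \setminus \Gr_{\ff_2}((X_1)_{S_1})$, which therefore trivially admits an affine paving. Since $X'$ is a type $A$ representation, the main result of \cite{CEFR} gives an affine paving of $\Gr_{\ff_1}(X')$; multiplying yields an affine paving of $\Gr_{\ff_1}(X') \times U_{\ff_2}$ as a finite disjoint union of copies of $\Gr_{\ff_1}(X')$. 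Pulling back through the locally trivial affine bundle $\Phi$ and taking the disjoint union over $(\ff_1, \ff_2)$ produces the desired affine paving of $\Gr_\ff(X) \setminus \Gr_\ff(X_S)$.

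The argument for $\Gr_\Gg(S) \setminus \Gr_{\Gg - \dim S^X}(S/S^X)$ is strictly parallel, using the split generating short exact sequence $0 \to S_1 \to S \to S' \to 0$, which is generating because $\Ext^1(S', S_1) = 0$ by Theorem~\ref{Thm:BongartzTypeA}(iv), together with the thinness of $S_1$. The main technical obstacle I anticipate is verifying that $\Phi$ restricted to each stratum really is a locally trivial affine bundle, because the fiber $\Hom_\Q(p, X'/N_1)$ over $(N_1, p)$ can change dimension as $N_1$ moves in $\Gr_{\ff_1}(X')$; this can be handled by further stratifying $\Gr_{\ff_1}(X')$ by the value of this Hom-dimension and refining the paving from \cite{CEFR} compatibly via its inductive construction.
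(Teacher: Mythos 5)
Your proposal follows essentially the same route as the paper's proof: decompose via the split generating sequence $0\to X'\to X\to X_1\to 0$ (resp.\ $0\to S_1\to S\to S'\to 0$), use thinness of the indecomposable $X_1$ in type $A$ to see that $\Gr_{\ff''}(X_1)$ is empty or a point, identify $\Gr_\ff(X)\setminus\Gr_\ff(X_S)$ as a union of strata via $X_S=p_\eta^{-1}((X_1)_{S_1})$, and pave each stratum using the affine pavings of \cite{CEFR}. The local-triviality concern in your final paragraph is already settled by the cited result \cite[Theorem~24]{CEFR} that the strata attached to a generating short exact sequence are locally trivial affine bundles over their images, so the extra stratification you propose is unnecessary.
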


\begin{proof}
By Corollary~\ref{Cor:MinDegGenerating} there are decompositions $X=X_1\oplus X'$ and $S=S_1\oplus S'$ and a generating short exact sequence
$$
\xymatrix@R=0pt{
0\ar[r]&X_1\ar[r]&Y_1\ar[r]&S_1\ar[r]&0\\
          &\oplus&\oplus&\oplus&\\
          &X'\ar@{=}[r]&X'&&\\
          &  &\oplus&\\
          &&S'\ar@{=}[r]&S'}
$$
Moreover, $X_S=X'\oplus (X_1)_{S_1}$ and $S^X=(S_1)^{X_1}$. We now prove that $\Gr_\ff (X)\setminus\Gr_\ff (X_S)$ admits an affine paving. By Theorem~\ref{Thm:BongartzTypeA}(iv),  the split short exact sequence 
$$
\xymatrix@R=0pt{
\eta:&0\ar[r]&X'\ar^{\iota_\eta}[r]&X\ar^{p_\eta}[r]&X_1\ar[r]&0}
$$
is generating split. Thus, we get an $\alpha$-partition  
$\Gr_\ff(X)=\coprod_{\ff'+\ff''=\ff}\mathcal{S}_{\ff',\ff''}^\eta$  and vector bundles 
$$\xymatrix{\mathcal{S}_{\ff',\ff''}^\eta\ar@{->>}[r]&\Gr_{\ff'}(X')\times\Gr_{\ff''}(X_1)}, \ \ N\mapsto (\iota_\eta^{-1}(N),p_\eta(N)).$$ 
Since $X_1$ is indecomposable and $\Q$ is of type $A$, $\Gr_{\ff''}(X_1)$ is {either} empty or a point. We notice that $X_S=p_\eta^{-1}((X_1)_{S_1})$. We conclude that $\Gr_\ff (X)\setminus\Gr_\ff (X_S)$ is the union of those strata $\mathcal{S}_{\ff',\ff''}^\eta$ such that $\ff''\leq \mathbf{dim} (X_1)_{S_1}$. Since $\Gr_{\ff'}(X')\times\Gr_{\ff''}(X_1)$ admits an affine paving by \cite{CEFR}, the same holds for all strata $\mathcal{S}_{\ff',\ff''}^\eta$ and thus for $\Gr_\ff (X)\setminus\Gr_\ff (X_S)$. 

The proof of the fact that $\Gr_\Gg (S)\setminus\Gr_{\Gg - \dim(S^X)}(S/S^X)$ admits an affine paving is obtained using the generating split short exact sequence
$
0\rightarrow S_1 \rightarrow S \rightarrow S' \rightarrow 0
$ and proceeding analogously.
\end{proof}
\begin{prop}\label{Lem:5.6.1/2}
Let $\Q$ be a Dynkin quiver of type  $A$ and let $\xi:0\rightarrow X\rightarrow Y\rightarrow S\rightarrow 0$ be a generating extension which gives rise to a minimal degeneration $Y\leq_{\textrm{deg}}X\oplus S$. Then the family $p_\xi$ associated to $\xi$ admits an affine paving.  
\end{prop}
\begin{proof}
By Lemma~\ref{l1}, one has that $$\SSS^{(0)}_{\ff, \Gg}\rightarrow ((\Gr_\ff (X)\times \Gr_\Gg (S) )-
(\Gr_\ff (X_S)\times \Gr_{\Gg -\dim(S^X)}(S/S^X)))\times \CC$$ and  $$\SSS^{(1)}_{\ff, \Gg}\rightarrow \Gr_\ff (X_S) \times \Gr_{\Gg-\mathbf{dim}\,S^X} (S/S^X)$$ are locally trivial affine bundles. The quiver Grassmannians $\Gr_\ff (X)$, $\Gr_\Gg (S)$, $\Gr_\ff (X_S)$ and $\Gr_{\Gg-\mathbf{dim}\,S^X} (S/S^X)$ admit affine pavings by \cite[Theorem~46]{CEFR}; moreover the open subsets $\Gr_\ff (X)\setminus\Gr_\ff (X_S)$ and 
$\Gr_\Gg (S)\setminus\Gr_{\Gg - \dim(S^X)}(S/S^X)$ admit affine pavings  by Lemma~\ref{l2}. It follows  that the bases of those affine bundles admit affine pavings. It follows that for $i=0,1$, the family $p_\xi|_{\SSS^{(i)}_{\ff, \Gg}}:\SSS^{(i)}_{\ff, \Gg}\rightarrow\CC$ admits an affine paving. Since an $\alpha$-partition of each part of an $\alpha$-partition gives rise to an $\alpha$-partition, we conclude that the whole family $p_\xi$ admits an affine paving.
\end{proof}

\begin{rmk}
The proof of Proposition~\ref{Lem:5.6.1/2} is based on Lemma~\ref{l2}. In particular, Proposition~\ref{Lem:5.6.1/2} holds for generating short exact sequences of a Dynkin quiver (not necessarily of type $A$) for which the analogous statement of Lemma~\ref{l2} holds.  
\end{rmk}

\subsection{Proof of the main result }\label{Subsec:ProofMainResult}

In this section, we collect all arguments developed above to prove Theorem~\ref{Thm:MainIntro} of the introduction which is the main result of the paper. For convenience of the reader we restate it here in a slightly different but equivalent form. 
\begin{thm}\label{Thm:TypeA}
Let $\Q$ be a Dynkin quiver and let $[M]$ and $[N]$ be two isomorphism classes of $\Q$-representations of the same dimension vector $\dd$ such that $M\leq_{\textrm{deg}}N$.
\begin{enumerate}
\item[(i)] There are well-defined cohomology algebras $H^\bullet(\Gr_\ee([M]))$ and $H^\bullet(\Gr_\ee([N]))$.
\item[(ii)] There is a well-defined map of algebras $c_{[N],[M]}:H^\bullet(\Gr_\ee([N]))\rightarrow H^\bullet(\Gr_\ee([M]))$.
\item[(iii)] If $\Q$ is of type $A$ then $c_{[N],[M]}$ is surjective.
\end{enumerate}
\end{thm}
\begin{proof}
By Lemma~\ref{Lem:GeomSettingQuivGrass}, a universal quiver Grassmannian of Dynkin type satisfies the conditions  \eqref{Def:SpecializationG}-\eqref{Def:SpecializationProper} of the geometric setting. Then parts (i) and (ii) follow from 
Theorem/Definition~\ref{Def:HSpecialization} and  Theorem/Definition~\ref{Def:Specialization} respectively. We now prove part (iii). By Proposition~\ref{equiv} we can assume that $M\leq_{\textrm{deg}}N$ is a minimal degeneration. By Corollary~\ref{Cor:MinDegGenerating} the minimal degeneration is given by a generating short exact sequence $\xi:0\rightarrow X\rightarrow Y\rightarrow S\rightarrow 0$. By Proposition~\ref{Lem:5.6.1/2} the specialization map $c_{p_\xi}$ is surjective. By Proposition~\ref{Prop:RestrictionSpecialization} $c_{[N],[M]}=c_{p_{\xi}}$.
\end{proof}

 \end{document}